\newtheorem{theorem}{Theorem}
\newtheorem{lemma}{Lemma}
\newtheorem{corollary}{Corollary}
\newtheorem{proposition}{Proposition}
\newtheorem{definition}{Definition}
\theoremstyle{definition}
\newtheorem{exmp}{Example}[section]
\newcommand{\interior}[1]{%
  {\kern0pt#1}^{\mathrm{o}}%
}
\title{State-Driven Dynamic Graphon Model}
\author{Shizhou Xu, Quanyan Zhu}
\date{August 2020}
\begin{document}

\maketitle

\begin{abstract}
This paper shows the equivalence class definition of graphons hinders a direct development of dynamics on the graphon space, and hence proposes a state-driven approach to obtain dynamic graphons. The state-driven dynamic graphon model constructs a time-index sequence of the permutation-invariant probability measures on the universal graph space by assigning i.i.d. state random processes to $\mathbbm{N}$ and edge random variables to each of the unordered integer pairs. The model is justified from three perspectives: graph limit definition preservation, genericity, and analysis availability. It preserves the graph limit definition of graphon by applying a bijection between the permutation-invariant probability measures on the universal graph space and the graphon space to obtain the dynamic graphon, where the existence of the bijection is proved. Also, a generalized version of the model is proved to cover the graphon space by an application of the celebrated Aldous-Hoover representation, where generalization is achieved by adding randomness to the edge-generating functions. Finally, analysis of the behavior of the dynamic graphon is shown to be available by making assumptions on the state random processes and the edge random variables.
\end{abstract}

\newpage

\section{Introduction}
Randomly networked systems, especially the large ones with unknown number of vertices, have been an active research area in different domains. Since the Erdős–Rényi random graph model \cite{ER} introduced in 1959, random graph models became an important mathematical technique in the study of arbitrarily large graphs. For example, the Erdős–Rényi model has been used to the study of phase transition problems in economics and physics, and to the epidemic spread modeling in biology $\cite{YRYP}$. The stochastic block models were developed to study the latent in social networks $\cite{HLL}$ in 1983. The main reason behind such wide range of applications of random graph models is that the resulting distributions on graphs make no apriori assumption on the number of vertices.\\

Since 2006, a sequence of papers by Lovasz and his coauthors $\cite{LL,BCL}$ developed an elegant graph limit theory, which systematically studied the distributions without apriori assumption on the number of vertices. The theory defines the convergence of a graph sequence and names the limit object graphons, which is denoted by $(\mathcal{W},\delta_{\square})$, where $\mathcal{W} := \{w: [0,1]^2 \rightarrow [0,1]| \text{ be borel-measurable}\}$ and the cut distance $\delta_{\square}$ is defined as the following: for any $w_1, w_2 \in \mathcal{W}$, $$\delta_{\square}(w_1,w_2):= \inf_{\phi\in \mathcal{S}([0,1])} ||w_1 - w_2^{\phi}||_{\square},$$ where $\mathcal{S}([0,1])$ denotes all the measure preserving maps from $[0,1]$ to $[0,1]$, $w_2^{\phi}(x,y) := w_2(\phi(x),\phi(y))$, and $$||w_1 - w_2||_{\square} := \sup_{S,T \subset [0,1]} |\int_{S \times T} w_1(x,y) - w_2(x,y) dxdy|.$$ Intuitively, each graphon is a random graph with countably infinite number of vertices. For any simple graph $F = (V,E)$, and any graphon $w$, the homomorphism density function $$t(F,w):= \int_{[0,1]^{|V|}} \prod_{(i,j) \in E}w(x_i,x_j) \prod_{i=1}^{|V|} dx_i$$ gives the probability of drawing $|V|$ number of nodes from $w$ and obtaining a graph that is isomorphic to $F$. We will give detailed explanation for $t(F,w)$ in the next section. Mathematically, graphon is the weak limit of a graph sequence with respect to the homomorphism density functions. The cut distance $\delta_{\square}$ defines graphons as equivalent classes on $\mathcal{W}$.\\

Equivalent constructions to graphon have also been developed in probability theory, actually long before the graphon. Examples includes the permutation-invariant measure on the graph universal space in $\cite{TT}$ and the exchangeable random arrays in $\cite{TA}$. The importance of graphon and its equivalents is bifold: (1) it is a distribution on graphs without apriori assumption on the number of vertices; (2) It is invariant under permutation and does not differentiate isomorphic graphs. Therefore, graphon is a rigorous tool to study arbitrarily large randomly networked system, and the permutation-invariance guarantees its value in the study of unlabeled and randomly sampled graphs.\\

In reality, interactive particles in physics, connected vertices on social networks, and data manifolds on ambient space all tend to vary over time. Moreover, due to the large number of nodes on the time-variant physics models, social networks, and data manifolds, a generic dynamic graphon model is of particular interest for those who study large networks in a dynamic setting. The difficulty of dynamic graphon model design is trifold:\\

\begin{itemize}
    \item (1. definition preservation) the model has to generalize the finite random graph models and then converges to graphons as the number of nodes goes to infinity;
    \item (2. genericity) the model needs to cover all graphon space at each time stage;
    \item (3. analysis availability) the model has to provide rich structure and analytic results for resulting the dynamical systems on $(\mathcal{W},\delta_{\square})$.\\
\end{itemize}

In this paper, we propose a state-driven approach to derive a class of dynamic graphon model that overcomes the three difficulties. In particular, we first show the proposed state-driven approach generalizes random graphs to graphons as the number of nodes goes to infinity under i.i.d. assumptions on the state random variables. Furthermore, we use the celebrated Aldous-Hoover representation to prove the proposed state-driven model covers $(\mathcal{W},\delta_{\square})$ via a bijection between graphons and permutation-invariant distributions on the universal graph space. Lastly, we show the resulting dynamic graphon model provides connections between the state dynamical system and the induced graphon dynamical system.\\

To the best of our knowledge, this work is the first to develop a generic dynamic model for graphons. The only work on developing a generic dynamic random graph model is $\cite{ZMN}$, which applies continuous Markov random processes to drive time-variant stochastic block models. Our state-driven model does not assume apriori the types of random process on the vertices and include arbitrary measurable edge-generating functions to form edges, and therefore is a generalization in both static setting and dynamic setting.\\

The rest of the report will be organized as the followings: Section 2 gives review for graphon and its equivalent constructions; Section 3 proposes our state-driven approach to derive dynamical systems on $(\mathcal{W},\delta_{\square})$, and shows the resulting dynamic graphon model after some generalization covers the dynamical system on $(\mathcal{W},\delta_{\square})$; Section 3 shows our state-driven model in a static setting is a generalization of the celebrated Erdős–Rényi model and stochastic block model; Section 4 gives some applications of the dynamic state-driven graphon model; Section 5 shows further research directions for this dynamic graphon model.\\

The terminologies in this paper will follow the standard ones in graph theory, graphon literature, and probability theory.\\

\section{Review Graphon}

In this section, we review the construction of $(\mathcal{W}, \delta_{\square})$ in two different aspects: (1) formal definition; (2) graph limit definition.\\

\subsection{Formal Definition}

Formally, $\mathcal{W} := \{w: [0,1]^2 \rightarrow [0,1]| \text{ be borel-measurable}\}$ as mentioned above. Given any $w \in \mathcal{W}$, $\delta_{\square}$ defines an equivalent class on $\mathcal{W}$ by $[w] := \{ w' \in \mathcal{W}: \delta_{\square}(w,w') = 0\}$. A graphon is an equivalent class on $\mathcal{W}$.\\

\begin{exmp}
Let $G = ([4],\{(1,2),(1,3)\})$ be a simple graph. The corresponding adjacency matrix and its corresponding graphon form are shown in Figure 1.

\begin{figure}
        \begin{tabular}{p{5cm}c}
            {$\displaystyle
                A_G = 
                {\renewcommand{\arraystretch}{1.5}
                \begin{pmatrix}
                    0 & 1 & 1 & 0\\
                    1 & 0 & 0 & 0\\
                    1 & 0 & 0 & 0\\
                    0 & 0 & 0 & 0\\
                \end{pmatrix}}
            $}
            &
            $\vcenter{\hbox{\includegraphics[scale=0.7]{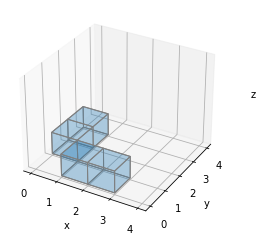}}}$
        \end{tabular}
        \caption{$A_G$ is the adjacency matrix of the simple graph $G := ([4],\{(1,2),(1,3)\})$, $z(x,y) := \mathbbm{1}_{([0,\frac{1}{4}] \times [\frac{1}{4},\frac{3}{4}]) \cup ([\frac{1}{4},\frac{3}{4}] \times [0,\frac{1}{4}])}$ is the corresponding graphon form of $A_G$.}
        \label{Figure 1}
\end{figure}
\end{exmp}

The formal definition does not generate any intuition to understand graphon. Moreover, the following result shows that a direct development of dynamics on $(\mathcal{W},\delta_{\square})$ is clumsy.\\

\begin{proposition}
If $p: [0,1] \times \mathcal{B}_{[0,1]} \rightarrow [0,1]$ is a Markov transition kernel, then the graphon chain generated by the following: $$w_{t+1}(x,y) := \int_{[0,1]^2} w_t(s,t)p(s,\{x\})p(t,\{y\})dsdt,$$ has a constant edge density.\\
\end{proposition}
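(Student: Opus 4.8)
The plan is to prove something slightly stronger than stated: the edge density is preserved at every single step, $t(K_2,w_{t+1}) = t(K_2,w_t) = \int_{[0,1]^2} w_t(x,y)\,dx\,dy$, from which constancy along the whole chain follows by a one-line induction. Before any computation I would fix the reading of the recursion, since the dummy variable in $\int_{[0,1]^2} w_t(s,t)\,p(s,\{x\})\,p(t,\{y\})\,ds\,dt$ clashes with the time index: rename the integration variables to $s,u$, and read $p(s,\{x\})$ as the transition density of the kernel $p(s,\cdot)$ evaluated at $x$ (equivalently, replace $p(s,\{x\})\,dx$ by $p(s,dx)$). The only property of the Markov kernel the argument actually uses is the normalization $\int_{[0,1]} p(s,\{x\})\,dx = p(s,[0,1]) = 1$ for every $s$.

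First I would substitute the recursion into the edge-density integral and exchange the order of integration by Tonelli's theorem; this is unconditionally valid here because the integrand is nonnegative (graphons and kernels are nonnegative) and bounded with finite total mass ($0 \le w_t \le 1$ and each $p(s,\cdot)$ is a probability measure), so no separate integrability check is needed. This produces
\[
\int_{[0,1]^2} w_{t+1}(x,y)\,dx\,dy \;=\; \int_{[0,1]^2} w_t(s,u)\left(\int_{[0,1]} p(s,\{x\})\,dx\right)\!\left(\int_{[0,1]} p(u,\{y\})\,dy\right) ds\,du .
\]
Then I would apply the normalization identity to replace each inner integral by $1$, which leaves exactly $\int_{[0,1]^2} w_t(s,u)\,ds\,du$; hence $t(K_2,w_{t+1}) = t(K_2,w_t)$, and induction on $t$ finishes the proof.

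The computation is routine, so essentially all the work is interpretational, and that is where I expect the main (mild) obstacle. Taken literally, $p(s,\{x\})$ is the $p(s,\cdot)$-mass of the singleton $\{x\}$, which is $0$ for Lebesgue-a.e.\ $x$ whenever the kernel is atomless; under that reading every $w_t$ with $t\ge 1$ is the zero graphon, which trivially has constant (zero) edge density, so the statement is true but vacuous. The informative version needs the density reading, and I would flag this convention explicitly at the outset. One further remark worth including: the recursion does not in general keep $w_{t+1}\le 1$ (that would require controlling $\int_{[0,1]} p(s,\{x\})\,ds$, about which the Markov property says nothing), but this is irrelevant to the edge-density computation, which uses only nonnegativity and normalization in the target variables. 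Conceptually, this proposition is the motivating negative result: the most naive way of pushing a graphon forward by a "stochastic relabeling" cannot even move the simplest graphon invariant, which is precisely why the paper turns instead to the state-driven construction.
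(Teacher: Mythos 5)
Your proposal is correct and follows essentially the same route as the paper's own proof: substitute the recursion into the edge-density integral, swap the order of integration, and use the normalization $\int_{[0,1]}p(s,dx)=1$ to collapse the inner integrals, then induct on $t$. Your added remarks on the $p(s,\{x\})$ versus $p(s,dx)$ reading and the dummy-variable clash are fair observations about notation (the paper itself silently switches to the $p(s,dx)$ convention mid-proof), but they do not change the argument.
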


\begin{proof}
By the definition of homomorphism density for graphon, let $E$ be the graph with two nodes and one edge between them, we have:

\begin{align*}
    t(E,w_{t+1}) & := \int_{[0,1]^2} w_{t+1}(x,y) dxdy\\
    & = \int_{[0,1]^2} \{\int_{[0,1]^2} w_t(s,t)p(s,dx)p(t,dy)dsdt\}\\
    & = \int_{[0,1]^2} w_t(s,t)\{\int_{[0,1]}p(s,dx)\}\{\int_{[0,1]}p(t,dy)\}dsdt\\
    & = \int_{[0,1]^2} w_t(s,t)dsdt\\
    & = t(E,w_t).
\end{align*}

Since it is true for any $t \in \{0,1,...,\}$, we have $t(E,w_t) = t(E,w_0), \forall t$.\\

\end{proof}

The above result shows that a direct development of dynamics on $(\mathcal{W},\delta_{\square})$ is non-trivial. The difficulty mainly comes from the definition of graphon as equivalent classes on $\mathcal{W}$, which are defined by $[w]:= \{w' \in \mathcal{W}: \delta_{\square}(w,w') = 0\}$, rather than simply measurable functions. Therefore, it is necessary to introduce the graph limit definition, which gives an intuitive explanation for graphon.\\

\subsection{Graph Limit Definition}

It is proved in $\cite{LL}$ that $t(\cdot, w_1) = t(\cdot, w_2) \iff \delta_{\square}(w_1,w_2) = 0$. Therefore, the graphon homomorphism density functions defines the same equivalent classes on $\mathcal{W}$ and therefore graphon.\\

To show graphon as the limit of graph sequence, we have to first define the homomorphism density functions for graphs and thereby the convergence of a graph sequence. In the rest of the section, let $G = (V,E)$ be a simple (i.e. undirected and unweighted) graph, and refer $V(G),E(G)$ to $V,E$ respectively.\\

\begin{definition}
Given two simple graphs $F,G$, a homomorphism function from $F$ to $G$ is a function $f : V(F) \rightarrow V(G)$ such that $(x,y) \in E(F) \implies (f(x),f(y)) \in E(G).$\\
\end{definition}

Therefore, we define $$\hom(F,G):= \{f : V(F) \rightarrow V(G)| (x,y) \in E(F) \implies (f(x),f(y)) \in E(G)\}$$ to be the set of all the homomorphism functions from $F$ to $G$. Now, we introduce the homomorphism density function, which is a key in defining graph limit.\\

\begin{definition}
Given two simple graphs $F,G$, a homomorphism density function from $F$ to $G$ is a function $t : F \times G \rightarrow [0,1]$ defined as below: 
\begin{align*}
    t(F,G) := \frac{|\hom(F,G)|}{|V(G)|^{|V(F)|}}.
\end{align*}
\end{definition}

It is not difficult to show that $t(\cdot,G) = t(\cdot,w_G), \forall G \in \mathcal{G}$. The example below gives a simple illustration of the equivalence between graph homomorphism density and the corresponding graphon homomorphism density.\\

\begin{exmp}
Let $G = ([4],\{(1,2),(1,3)\})$ as above, and $F := ([3],\{(1,2)(1,3)\})$. It is clear that $|V(G)|^{|V(F)|} = 4^3 = 64$. To calculate $|\hom(F,G)|$, we need to find all the maps that preserve the connectivity of $E(F)$. Therefore, one could verify that $\hom(F,G)$ consists of all maps that map $[3]$ to $[3]$ and $|\hom(F,G)| = 3! = 6$.\\

On the other hand, we could also use the graphon form of $G$, $w_G$, to calculate $t(F,G) = t(F,w_G)$. To simplify the equation below, we denote $E_F := ([0,\frac{1}{4}] \times [\frac{1}{4},\frac{3}{4}]) \cup ([\frac{1}{4},\frac{3}{4}] \times [0,\frac{1}{4}])$

\begin{align*}
    t(F,w_G) & := \int_{[0,1]^3} w_G(x_1,x_2)w_G(x_2,x_3) \prod_{i=1}^3 dx_i\\
    & = \int_{[0,1]^3} \mathbbm{1}_{E_F}(x_1,x_2) \mathbbm{1}_{E_F}(x_2,x_3) \prod_{i=1}^3 dx_i\\
    & = \int_{[0,1]^3} \mathbbm{1}_{([0,\frac{1}{4}] \times [\frac{1}{4},\frac{3}{4}] \times [0,\frac{1}{4}]) \cup ([\frac{1}{4},\frac{3}{4}] \times [0,\frac{1}{4}] \times [\frac{1}{4},\frac{3}{4}])}(x_1,x_2,x_3) \prod_{i=1}^3 dx_i\\
    & = \frac{2}{4}\frac{1}{4}\frac{1}{4}+ \frac{2}{4}\frac{1}{4}\frac{2}{4} = \frac{6}{4^3} = t(F,G)\\
\end{align*}

Figure 2 shows geometrically the meaning of the integration above: $t(F,w_G)$ is the volume of the shaded area.

\begin{figure}[h]
\centering
        \begin{tabular}{p{5cm}c}
            $\vcenter{\hbox{\includegraphics[scale=0.7]{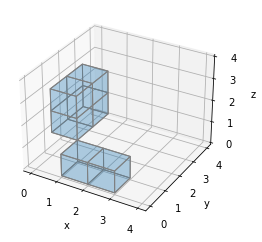}}}$
        \end{tabular}
        \caption{The volume of the shaded area is equal to $t(F,w_G) = t(F,G)$, the probability of drawing $|V(F)| = 3$ nodes from $V(G) = [4]$ and obtain a graph isomorphic to $F$}
        \label{Figure 2}
    \end{figure}

\end{exmp} 

Given graph homomorphism density functions, we are ready to define the limit of a graph sequence. Let $\{G_i\}_{i \in \mathbb{N}} = \{(V_i,E_i)\}_i$ be a sequence of simple graphs. We say that $\{G_i\} = \{(V_i,E_i)\}_i$ converges if the sequence $\{t(F,G_i)\}_i$ converges for all $F \in \mathcal{G}$, where $\mathcal{G}$ denotes the set of all simple graphs. Otherwise, we say that the graph sequence diverges.\\

\begin{definition}
Given a convergent sequence of graphs $\{G_i\}_i$, $w$ is the limit of the sequence if $$\lim_{n \rightarrow \infty} t(F,G_i) = t(F,w), \forall F \in \mathcal{G}.$$
\end{definition}

It turns out that $(\mathcal{W},\delta_{\square})$ consists of all the graph sequence limits defined by the homomorphism density functions as above. Conversely, given an arbitrary graphon, there exists a convergent graph sequence with limit equal to the graphon. For more detailed explanation of graph sequence convergence, the choice of graph parameters, and the construction of graphons, we refer readers to $\cite[p.173-193]{LL}$.\\

Remark: It is important to notice that the homomorphism density function is equal to the probability of uniformly drawing $|V(F)|$ number of vertices from $G$ and obtaining an sample sub-graph that is isomorphic to $F$. That is, $t(\cdot,G)$ is a distribution on $\mathcal{G}$ and is invariant under isomorphisms, which is permutation of graph nodes. Therefore, the study of graphon is an interesting intersection between the permutation-invariant distribution on random adjacency matrix in probability theory and graph theory. In fact, we prove in "State-Driven Dynamic Graphon" that there is a bijection map between the graphon space and the exchangeable random 2-dimensional array which has been studied in probability theory and statistics since 1920s.\\

\section{State-Driven Dynamic Graphon Model}

In this section, we first propose our state-driven approach to a dynamical systems on the finite random graph space, and extend naturally the dynamical system to the universal graph space. At each time stage, the model becomes a countable random graph model which we define below. Furthermore, we show the existence of a bijective map between $(\mathcal{W},\delta_{\square})$ and the countable random graph model. Thereafter, we use the bijective map and the Aldous-Hoover theorem to prove that for every time-variant graphon, there exists a corresponding state-driven dynamic graphon model after some generalization. That is, our state-driven approach provides broad enough models to cover the graphon space. Lastly, we show that the state-driven approach indeed rich structure to analyze the dynamical systems on graphon space.\\

Before proposing our state-driven approach, we need to first define the finite graph space and the universal graph space.\\

\begin{definition}
$(\mathcal{G}_{[N]},\mathcal{B}_{\mathcal{G}_{[N]}})$ is the finite graph space on $[N]$.\\
\end{definition}

Here, $[N]: = \{1,2,...,N\}, \forall N \in \mathbb{N}$, $\mathcal{G}_{[N]} := \{([N],e): e \in \{0,1\}^{[N]\times[N]}, e(x,x) = 0, e(x,y) = e(y,x), \forall x,y \in [N]\}$. Also, $\mathcal{B}_{\mathcal{G}_{[N]}} \subset 2^{\mathcal{G}_{[N]}}$ is an atomic sigma-algebra.\\

\begin{definition}
$(\mathcal{G}_{\mathbb{N}},\mathcal{B}_{\mathcal{G}_{\mathbb{N}}})$ is called the universal graph space.\\
\end{definition}

Where $\mathcal{G}_{\mathbb{N}}$ and $\mathcal{B}_{\mathcal{G}_{\mathbb{N}}}$ are defined analogously as in the finite graph space above. Now, we give a formal definition of the countable random graph model.\\

\begin{definition}
A countable random graph model is a probability measure $P: \mathcal{B}_{\mathcal{G}_{\mathbb{N}}} \rightarrow [0,1]$ that is invariant under permutations of $\mathbb{N}$.\\
\end{definition}

For notation in this work, let $(\Omega,\mathcal{F}, \mathbb{P})$ be an underlying probability space, $(\mathcal{X},\mathcal{B}_{\mathcal{X}})$ be a state measurable space. $\{(V_{i,t})_t\}_{i \in \mathbb{N}}$ be the set of random processes, where for all $i \in \mathbb{N}, t \in T$, $V_{i,t}: \Omega \rightarrow \mathcal{X}$ is a random variable.\\

Our state-driven approach to generate dynamics on graphon space is the following:\\

\begin{itemize}
    \item Assign each node $i \in \mathbb{N}$ on the graph with a corresponding state random process $(V_{i,t})_{t \in T}$;
    \item Pick an measurable function $e: \mathcal{X} \times \mathcal{X} \rightarrow \{0,1\}$, and for each unordered pair $(V_i,V_j)$, each $t \in T$, assign an edge random variables $Y_{ij,t} \sim Bern(e(V_{i,t},V_{j,t}))$;
    \item Construct the dynamics of countable random graph model via state-driven approach;
    \item Prove there exists a bijection between the countable random graph model and the graphon space;
    \item Derive a dynamical system on the graphon space from the isomorphism and the dynamics of countable random graph model.\\
\end{itemize}

We call our approach state-driven, because the dynamics on the graphon space is induced by the state random processes on the nodes and corresponding edge-generating random variables.\\

Before developing the dynamical systems, we need to first show the state-driven model induces a consistent sequence of probability measures on the sequence of finite graph spaces indexed by $N$ and thereby induces a probability measure on the universal graph space at any fixed time $t$.\\

Given $(\Omega,\mathcal{F},\mathbb{P})$ and $(\mathcal{X},\mathcal{B}_{\mathcal{X}})$ as above, we generate a finite set of random variables $\{V_i\}_{i = 1}^N$, where $V_i:\Omega \rightarrow \mathcal{X}$, for each $i \in \{1,...,N\}$. Therefore, we have a finite set of induced probability measures on $(\mathcal{X},\mathcal{B}_{\mathcal{X}})$: $\{\mu_i\}_{i = 1}^N$ that are defined by $\mu_i(E) := \mathbb{P}(\{V_i^{-1}(E)\})$, for all $E \in \mathcal{B}_{\mathcal{X}}, i \in \mathbb{N}$.\\

Now, in order to generate a random graph from random variables $\{V_i\}_{i=1}^N$, it is natural for us to define edge-generating functions $$\mathcal{E}:= \{e \in [0,1]^{\mathcal{X} \times \mathcal{X}}: \forall x,y \in \mathcal{X}, e(x,y) = e(y,x), e(x,\cdot), e(\cdot,y) \text{ measurable}\}.$$ Therefore, for each realization of $V := \{V_i\}_{i = 1}^N$, each edge-generating function $e$ induces a random graph model denoted by $G_e(V) : = \{([N],E): (i,j) \in E \iff Y_{ij} = 1\}.$ Here, $Y_{ij} \sim Bern(e(V_i,V_j))$. In the rest of the paper, we will abuse the notation $G_e(V)$ to represent the conditional probability measure on $(G_{[N]}, \mathcal{B}_{G_{[N]}})$: $\forall A \in \mathcal{B}_{G_{[N]}}$, $$P_e(A | V) := \prod_{(i,j)}e(V_i,V_j)^{A_{ij}}(1-e(V_i,V_j))^{1-A_{ij}}.$$


There are three generalizations to make to the current model with a fixed $e \in \mathcal{E}$ for all pairs of $(V_i,V_j)$:\\

\begin{itemize}
    \item[i] pick different edge-generating function $e_{ij} \in \mathcal{E}$ for different pairs of $(V_i,V_j)$;
    \item[ii] assign a distribution on $\mathcal{E}$ and draw $e_{ij}$ from that distribution for each pair of $(V_i,V_j)$;
    \item[iii] define graph-generating functions $g: X^N \rightarrow \mathcal{G}_N{[N]}$ to have a more general setting and hence more interesting results.\\
\end{itemize}

We will show that generalization (2) is broad enough to generate $(\mathcal{W},\delta_{\square})$. That is, if we add one more level of randomness on the choice of edge-generating functions, our state-driven model cover all graphons.\\

Now, given the finite graph space $(\mathcal{G}_{[N]},\mathcal{B}_{\mathcal{G}_{[N]}})$ on $[N]$, fix an edge-generating function $e$ and the finite set of random variables $V$ as before, and let the event $E_{ij} = \{e(V_i,V_j) = 1\}$, we obtain the following result:\\

\begin{lemma} $G_e(V)$ induces a probability measure, denoted by $P_{G_e(V)}$, on $(\mathcal{G}_{[N]},\mathcal{B}_{\mathcal{G}_{[N]}})$ for each $N < \infty$.\\
\end{lemma}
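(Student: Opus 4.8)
The statement to prove is that $G_e(V)$ induces a probability measure $P_{G_e(V)}$ on the finite graph space $(\mathcal{G}_{[N]}, \mathcal{B}_{\mathcal{G}_{[N]}})$ for each finite $N$. Since $\mathcal{B}_{\mathcal{G}_{[N]}}$ is the atomic $\sigma$-algebra on the finite set $\mathcal{G}_{[N]}$, it suffices to define a nonnegative weight on each atom (each simple graph on $[N]$) and show these weights sum to one; the rest follows from finite additivity. The plan is therefore: first, write down the conditional weight of an atom $A \in \mathcal{G}_{[N]}$ given a realization of $V = (V_1,\dots,V_N)$, namely $P_e(A\mid V) = \prod_{(i,j)} e(V_i,V_j)^{A_{ij}}(1-e(V_i,V_j))^{1-A_{ij}}$ as already introduced in the text, where the product runs over unordered pairs $i<j$; second, take the expectation over $V$ to define $P_{G_e(V)}(A) := \mathbb{E}_{\mathbb{P}}\big[P_e(A\mid V)\big] = \int_{\mathcal{X}^N} \prod_{i<j} e(x_i,x_j)^{A_{ij}}(1-e(x_i,x_j))^{1-A_{ij}}\, \prod_{i=1}^N d\mu_i(x_i)$; third, verify the probability axioms.

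The key steps in order: \emph{(a) Measurability / well-definedness of the integrand.} For fixed $A$, the map $(x_1,\dots,x_N) \mapsto \prod_{i<j} e(x_i,x_j)^{A_{ij}}(1-e(x_i,x_j))^{1-A_{ij}}$ is $\mathcal{B}_{\mathcal{X}}^{\otimes N}$-measurable, because each factor $e(\cdot,\cdot)$ is measurable in each argument by the definition of $\mathcal{E}$, finite products of measurable functions are measurable, and raising to a fixed exponent in $\{0,1\}$ is trivial. Hence the integral against the product measure $\mu_1 \otimes \cdots \otimes \mu_N$ exists. \emph{(b) Nonnegativity and boundedness.} Since $e(x,y) \in [0,1]$, each factor lies in $[0,1]$, so $0 \le P_e(A\mid V) \le 1$ pointwise, giving $0 \le P_{G_e(V)}(A) \le 1$. \emph{(c) Normalization.} Summing over all atoms $A \in \mathcal{G}_{[N]}$ and interchanging the (finite) sum with the integral,
\begin{align*}
\sum_{A \in \mathcal{G}_{[N]}} P_{G_e(V)}(A)
&= \int_{\mathcal{X}^N} \sum_{A} \prod_{i<j} e(x_i,x_j)^{A_{ij}}(1-e(x_i,x_j))^{1-A_{ij}} \; \prod_{i=1}^N d\mu_i(x_i)\\
&= \int_{\mathcal{X}^N} \prod_{i<j} \big( e(x_i,x_j) + (1-e(x_i,x_j)) \big) \; \prod_{i=1}^N d\mu_i(x_i) = 1,
\end{align*}
where the middle equality expands the product over the $\binom{N}{2}$ independent binary choices $A_{ij} \in \{0,1\}$. \emph{(d) Countable (here finite) additivity} is automatic on an atomic $\sigma$-algebra once the atom weights are fixed and sum to one, so setting $P_{G_e(V)}(\mathcal{A}) := \sum_{A \in \mathcal{A}} P_{G_e(V)}(A)$ for $\mathcal{A} \in \mathcal{B}_{\mathcal{G}_{[N]}}$ yields the desired probability measure.

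I do not expect a serious obstacle here; the content is essentially a Fubini/Tonelli bookkeeping argument plus the observation that the per-pair Bernoulli weights factor and sum telescopically to one. The only point requiring mild care is the interchange of summation and integration in step (c) and the measurability claim in step (a): both are routine because $N$ is finite (so the sum over $\mathcal{G}_{[N]}$ is a finite sum, and Tonelli applies to the nonnegative integrand with no integrability concern) and because the defining conditions on $\mathcal{E}$ were chosen precisely to make $e$ separately measurable. A secondary remark worth including is that the construction does not yet require the $\mu_i$ to be identical; permutation-invariance of the resulting measure — needed later for the countable model — is a separate property that would follow when all $V_i$ are i.i.d., but it is not asserted by this lemma and need not be addressed in its proof.
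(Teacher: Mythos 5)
Your proposal is correct and takes essentially the same approach as the paper: both define $P_{G_e(V)}(A) = \int_{\mathcal{X}^N} P_e(A\mid v)\,\mu(dv)$ as a mixture of the conditional Bernoulli-product measures over the law of $V$. The paper simply asserts that $P_e(\cdot\mid v)$ is a probability measure on the atomic $\sigma$-algebra and stops there, whereas you explicitly verify this via the telescoping expansion $\prod_{i<j}\bigl(e(x_i,x_j) + (1-e(x_i,x_j))\bigr) = 1$ together with the measurability and additivity checks; this is the same argument with the routine steps spelled out.
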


\begin{proof}
For any fixed $N < \infty$, let $A \in \mathcal{G}_{[N]}$, we have:
\begin{equation*}
    \begin{aligned}
    P_{G_e(V)}(A) &= \int_{\mathcal{X}^N} P_e(A|v)\mu(dv).
    \end{aligned}
\end{equation*}

\noindent Where $\mu$ is the measure of the random vector $V = (V_1,...,V_N)$. Since $\mu$ is a probability measure on $\mathcal{X}^N$ and $\mathbb{P}(\cdot|v)$ is a probability measure on $\mathcal{B}_{\mathcal{G}_{[N]}}$, we are done.\\
\end{proof}

Therefore, we have shown that $(\mathcal{G}_{[N]},\mathcal{B}_{\mathcal{G}_{[N]}}, P_{G_e(V)})$ is a well-defined finite graph probability space. We call it State-driven Random Graph Space because the randomness and the probability measure on the finite graph space $(\mathcal{G}_{[N]},\mathcal{B}_{\mathcal{G}_{[N]}})$ come from the states of the vertices $V$.\\

Now, we start to develop a dynamical system based on $(\mathcal{G}_{[N]}, \mathcal{B}_{\mathcal{G}_{[N]}})$. By Lemma 1, we have that for each fixed $t \in T$, $V_t := \{V_{i,t}\}_{i \in [N]}$ induces a probability measure $P_{G_e(V_t)}$ on the $(\mathcal{G}_{[N]},\mathcal{B}_{\mathcal{G}_{[N]}})$. Let $\mathcal{G}_{[N]}^\mathbb{N}$ denotes the set of all possible discrete trajectories on the product $[N]$-graph space, $\mathcal{B}_{\mathcal{G}_{[N]}}^{\otimes \mathbb{N}}$ denotes the completion of the product of sigma-algebras $\mathcal{B}_{\mathcal{G}_{[N]}}^{\mathbb{N}}$, and $\otimes P_{G_e(V_t)}$ denote the completion of $\prod_{t =1}^{\infty} P_{G_e(V_t)}$, we then have from Kolmogorov extension theorem that $(\mathcal{G}_{[N]}^\mathbb{N},\mathcal{B}_{\mathcal{G}_{[N]}}^{\otimes \mathbb{N}},\otimes P_{G_e(V_t)})$ is a well-defined probability measure space. One can easily check that the consistency assumptions are satisfied due to the definition of $P_{G_e(V_t)}$. Since every discrete random process is a random variable on the infinite product space, we have proved the following result:\\

\begin{theorem}
Given any finite set of random processes $\{(V_{i,t})_{t \in \mathbb{N}}\}_{i \in [N]}$, and an edge-generating function $e: X \times X \rightarrow [0,1]$, $\otimes P_{G_e(V_t)}$ defines a discrete random process on the space $(\mathcal{G}_{[N]},\mathcal{B}_{\mathcal{G}_{[N]}})$.\\
\end{theorem}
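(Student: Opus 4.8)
The plan is to realize the asserted process as the coordinate process on the path space $\mathcal{G}_{[N]}^{\mathbb{N}}$, with the path measure obtained from the Kolmogorov extension theorem and the one-dimensional marginals supplied by Lemma~1. The only structural facts used are that $\mathcal{G}_{[N]}$ is a finite set and $\mathcal{B}_{\mathcal{G}_{[N]}}$ its power set, so $(\mathcal{G}_{[N]},\mathcal{B}_{\mathcal{G}_{[N]}})$ is trivially standard Borel and the hypotheses of the extension theorem hold with no regularity issues.

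First I would invoke Lemma~1 to obtain, for each $t\in\mathbb{N}$, the probability measure $P_{G_e(V_t)}$ on $(\mathcal{G}_{[N]},\mathcal{B}_{\mathcal{G}_{[N]}})$ induced by $V_t=(V_{1,t},\dots,V_{N,t})$. Next, for each finite $S=\{t_1<\dots<t_k\}\subset\mathbb{N}$ I would set $\mu_S:=\bigotimes_{i=1}^{k}P_{G_e(V_{t_i})}$ on $\bigl(\mathcal{G}_{[N]}^{\,S},\mathcal{B}_{\mathcal{G}_{[N]}}^{\otimes S}\bigr)$. The third step is the Kolmogorov consistency check: for $S\subset S'$, the pushforward of $\mu_{S'}$ under the restriction map $\mathcal{G}_{[N]}^{\,S'}\to\mathcal{G}_{[N]}^{\,S}$ should equal $\mu_S$; since each $\mu_S$ is a product measure this is immediate, as marginalizing a product over the coordinates of $S'\setminus S$ (each a probability factor of total mass $1$) returns the product over $S$. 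The extension theorem then produces a unique probability measure on $\bigl(\mathcal{G}_{[N]}^{\mathbb{N}},\mathcal{B}_{\mathcal{G}_{[N]}}^{\mathbb{N}}\bigr)$ with these finite-dimensional marginals; completing the $\sigma$-algebra and the measure gives precisely $\bigl(\mathcal{G}_{[N]}^{\mathbb{N}},\mathcal{B}_{\mathcal{G}_{[N]}}^{\otimes\mathbb{N}},\otimes P_{G_e(V_t)}\bigr)$. Finally, letting $X_t:\mathcal{G}_{[N]}^{\mathbb{N}}\to\mathcal{G}_{[N]}$ be the $t$-th coordinate projection — measurable with respect to the product $\sigma$-algebra by construction — the family $(X_t)_{t\in\mathbb{N}}$ is a $\mathcal{G}_{[N]}$-valued discrete-time process with law $\otimes P_{G_e(V_t)}$, which is the claim.

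I do not anticipate a real obstacle: granted Lemma~1, the argument is bookkeeping and the consistency step is trivial for product measures (indeed one could bypass Kolmogorov entirely and invoke the infinite-product construction directly). The point worth flagging is modeling rather than mathematics: defining the path measure as the \emph{product} $\otimes P_{G_e(V_t)}$ tacitly makes the graph snapshots at distinct times independent, whereas a genuinely dynamic model driven by temporally correlated state processes $(V_{i,t})_t$ would more naturally use the true joint law of $(G_e(V_t))_{t\in\mathbb{N}}$ induced by the joint law of the state trajectories. In that alternative, consistency of the finite-dimensional distributions is automatic because each is a marginal of one measure, so the same Kolmogorov argument goes through verbatim; I would note this and otherwise keep the paper's product convention.
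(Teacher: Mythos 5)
Your proof follows the same route as the paper: obtain the one-dimensional marginals $P_{G_e(V_t)}$ from Lemma~1, form the product measure, verify the (trivial) Kolmogorov consistency conditions, and realize the process as the coordinate process on $\bigl(\mathcal{G}_{[N]}^{\mathbb{N}},\mathcal{B}_{\mathcal{G}_{[N]}}^{\otimes\mathbb{N}},\otimes P_{G_e(V_t)}\bigr)$. Your closing remark --- that taking the product of the time-$t$ marginals forces independence of the graph snapshots across time, whereas the joint law of $(G_e(V_t))_t$ induced by the state trajectories would be the more faithful construction --- is a fair criticism of the paper's convention, but it does not affect the correctness of the argument as stated.
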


Note: The authors only consider discrete-time processes in this paper. But it is clear that one could formulate continuous random graph processes and dynamical countable random graph model by choosing continuous random processes on the state space. The authors will generalize the analysis to continuous processes in future papers.\\

Therefore, by assigning each vertex with a state random process, and fixing an edge-generating function $e$, we have shown that the state-driven approach successful derive a random process on $(\mathcal{G}_{[N]},\mathcal{B}_{\mathcal{G}_{[N]}})$. It is not hard to see that we have built the model in a way such that it can be extended to the graphs with countably infinite vertices, which have a close relationship with graphon or graph limit.\\

The following result gives an $e$-robust condition on the state random variables such that the proposed state-driven approach generates a countable random graph model. That is, for any given fixed $e \in \mathcal{E}$, $P_{G_e(V)}$ is permutation-invariant probability measure on $(\mathcal{G}_{\mathbbm{N}},\mathcal{B}_{\mathcal{G}_{\mathbbm{N}}})$. The reason for $e$-robustness condition is to study the edge-generating law of a randomly networked system and predict future network, given state random processes.\\

\begin{theorem}
$\forall e \in \mathcal{E}, \forall \sigma \in Sym(\mathbbm{N})$, $P_{G_e(V)}$ is $\sigma$-invariant if and only if $V$ consists of independent random variables that are equal $\mathbbm{P}$-a.e..\\
\end{theorem}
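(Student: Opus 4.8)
The plan is to prove both directions separately, with the ``if'' direction being routine and the ``only if'' direction carrying the real content. For the ``if'' direction, suppose $V = (V_i)_{i \in \mathbbm{N}}$ consists of independent random variables with $V_i = V_j$ $\mathbbm{P}$-a.e.\ for all $i,j$; call the common law $\mu$ on $(\mathcal{X},\mathcal{B}_{\mathcal{X}})$. Then for any finite cylinder event $A \in \mathcal{B}_{\mathcal{G}_{\mathbbm{N}}}$ depending only on coordinates in $[N]$, the formula from Lemma~1 gives $P_{G_e(V)}(A) = \int_{\mathcal{X}^N} P_e(A\mid v)\, \mu^{\otimes N}(dv)$, since the joint law of $(V_1,\dots,V_N)$ is the product measure $\mu^{\otimes N}$ by independence and identical distribution. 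A permutation $\sigma \in \mathrm{Sym}(\mathbbm{N})$ acts on $A$ by relabelling, and because $P_e(\cdot\mid v)$ has the symmetric product form $\prod_{(i,j)} e(v_i,v_j)^{A_{ij}}(1-e(v_i,v_j))^{1-A_{ij}}$ (using $e(x,y)=e(y,x)$), one checks $P_e(\sigma A \mid v) = P_e(A \mid \sigma^{-1} v)$; then the change of variables $v \mapsto \sigma^{-1} v$ leaves $\mu^{\otimes N}$ invariant, so $P_{G_e(V)}(\sigma A) = P_{G_e(V)}(A)$. Since such cylinder events generate $\mathcal{B}_{\mathcal{G}_{\mathbbm{N}}}$ and form a $\pi$-system, a standard Dynkin argument extends $\sigma$-invariance to all of $\mathcal{B}_{\mathcal{G}_{\mathbbm{N}}}$.

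For the ``only if'' direction, I would argue contrapositively: if $V$ fails to be i.i.d.\ (in the a.e.-equal sense), I must exhibit some $e \in \mathcal{E}$ and some $\sigma$ for which $P_{G_e(V)}$ is not $\sigma$-invariant. The phrase ``$e$-robust'' in the surrounding text signals that invariance is required to hold for \emph{every} $e \in \mathcal{E}$, so it suffices to find one bad $e$. Split into two cases. First, if the $V_i$ are not identically distributed, say $\mu_i \neq \mu_j$, pick a set $B \in \mathcal{B}_{\mathcal{X}}$ with $\mu_i(B) \neq \mu_j(B)$ and take $e(x,y) := \mathbbm{1}_B(x)\mathbbm{1}_B(y)$, which lies in $\mathcal{E}$; compute the probability of the single-edge event on the pair $\{i,k\}$ versus $\{j,k\}$ for a third index $k$ with a generic state, and the two differ, contradicting invariance under the transposition $(i\,j)$. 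Second, if all marginals agree but the $V_i$ are not independent, the mismatch must show up in some finite-dimensional joint law differing from the product of marginals; I would locate a pair of indices $\{i,j\}$ whose joint law is not the product, choose $e$ of the form $\mathbbm{1}_{B \times B}$ (or a finite combination thereof) so that $\mathbb{E}[e(V_i,V_j)] \neq \mathbb{E}[e(V_i,V_{j'})]$ for an index $j'$ that is independent of $V_i$ with the same marginal, and again violate invariance under a transposition.

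The main obstacle is the second case of the ``only if'' direction: translating ``not independent'' into a concrete failure of edge-probability symmetry is delicate because dependence among the $V_i$ need not be visible at the level of any single pair, and the edge law $P_e(\cdot\mid V)$ only ever couples states through the functions $e(V_i,V_j)$. I would handle this by working with the full exchangeability characterization rather than pairwise statistics: $P_{G_e(V)}$ being $\sigma$-invariant for all $e$ forces the sequence of edge-indicator arrays to be jointly exchangeable for every choice of edge kernel, and by choosing $e$ ranging over indicators of measurable rectangles one can recover the joint distribution of $(\mathbbm{1}_{B_1}(V_{i_1}), \dots, \mathbbm{1}_{B_k}(V_{i_k}))$ for arbitrary finite tuples; exchangeability of all these together with de Finetti-type reasoning (or a direct argument that a permutation-invariant law on $\mathcal{X}^{\mathbbm{N}}$ whose every finite subcollection of events is exchangeable under arbitrary measurable test functions must be i.i.d.\ when additionally one knows, e.g., that two coordinates are a.e.\ equal) pins down that $V$ itself is i.i.d.\ in the required sense. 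Care is needed because exchangeability alone gives a mixture of i.i.d.\ laws, not an i.i.d.\ law; the extra rigidity ``equal $\mathbbm{P}$-a.e.''\ in the statement is what collapses the mixture to a point mass, and I would make sure the argument genuinely uses it, likely by noting that $V_i = V_j$ a.e.\ together with the mixing measure being nondegenerate would be contradictory.
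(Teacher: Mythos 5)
Your ``if'' direction is essentially the paper's own argument (a change of variables under $\sigma$ using the symmetric product form of $P_e(\cdot\mid v)$ and the permutation-invariance of the i.i.d.\ product law), with the added care of the cylinder-set/Dynkin step, and that part is sound. The genuine gap is the second case of your ``only if'' direction, and it cannot be closed, because the converse implication is false as stated. Take any non-constant random variable $U$ on $\mathcal{X}$ and set $V_i := U$ for every $i$ (or, more generally, take any exchangeable non-i.i.d.\ sequence). For $v$ of the form $(u,u,\dots)$ the conditional law $P_e(\cdot\mid v)$ depends on a finite graph only through its number of edges, hence is permutation-invariant, and therefore so is $P_{G_e(V)}$ for \emph{every} $e\in\mathcal{E}$ and every $\sigma$; yet the $V_i$ are not independent. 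This is exactly where your de Finetti reasoning lands: exchangeability of all the induced edge arrays, even over all choices of rectangle-indicator kernels, only pins $V$ down to a mixture of i.i.d.\ laws, and nothing in the hypothesis excludes a nondegenerate mixture. Your hope that the clause ``equal $\mathbbm{P}$-a.e.'' collapses the mixture does not rescue the argument either: read literally, ``independent and equal a.e.'' forces each $V_i$ to be a.s.\ constant (a variable independent of itself is degenerate), and the example above still satisfies the invariance hypothesis while failing the independence conclusion, so the equivalence is broken no matter how that clause is interpreted.

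For what it is worth, your first case (marginals, or rather pairwise joint laws, differ across index pairs) is the only case the paper itself treats: its converse argument assumes outright that $\mu_{ki}$ and $\mu_{kj}$ differ on some set $B$ and builds an indicator-type $e$ together with a transposition to detect the discrepancy, which is precisely your rank-one-indicator construction. So your proposal reproduces the paper's proof where the paper has one, and correctly identifies --- but cannot repair, because no repair exists --- the exchangeable-but-dependent case that the paper silently omits.
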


\begin{proof}
First assume $V$ consists of independent random variables that are equal $\mu$-a.e.. It follows that $\forall \sigma$, $\forall A \in \mathcal{B}_{\mathcal{G}_{\mathbbm{N}}}$,

\begin{align*}
    P_{G_e(V)}(\sigma(A)) & = \int_{\mathcal{X}^{\mathbbm{N}}}P_e(\sigma(A)|v) \prod_{i \in \mathbbm{N}} \mu_1(dv_i)\\
    & = \int_{\mathcal{X}^{\mathbbm{N}}}P_e(\sigma(A)|\sigma(v)) \prod_{i \in \mathbbm{N}} \mu_1(dv_{\sigma(i)})\\
    & = \int_{\mathcal{X}^{\mathbbm{N}}}P_e(A|v) \prod_{i \in \mathbbm{N}} \mu_1(dv_{i})\\
    & = P_{G_e(V)}(A).
\end{align*}

The second equality comes from

\begin{align*}
    &\prod_{(i,j)}e(v_i,v_j)^{A_{{\sigma(i)}{\sigma(j)}}}(1-e(v_i,v_j))^{1-A_{{\sigma(i)}{\sigma(j)}}} \prod_{i \in \mathbbm{N}} \mu_1(dv_i)\\
    = & \prod_{(i,j)}e(v_{\sigma(i)},v_{\sigma(j)})^{A_{{\sigma(i)}{\sigma(j)}}}(1-e(v_i,v_j))^{1-A_{{\sigma(i)}{\sigma(j)}}} \prod_{i \in \mathbbm{N}} \mu_1(dv_{\sigma(i)}).
\end{align*}

Conversely, assume $\exists B$ such that $\mu(B) > 0$, and $\exists i \neq j \neq k \in \mathbbm{N}$ such that $\mu_{ki}(P_{ki}(B)) \neq \mu_{kj}(P_{kj}(B))$, then let $e \in \mathcal{E}$ such that 

\begin{equation*}
    \begin{aligned}
        \begin{cases}
        e(v_k,v_i) = e(v_k,v_j) = 1 & \text{ on } B\\
        e(v_k,v_i) = e(v_k,v_j) = 0 & \text{ on } B^c
        \end{cases}
    \end{aligned}
\end{equation*}

Finally, let $\sigma(i) = \sigma(j)$, $\sigma(k) = \sigma(k)$, and $A = A_{ki} = 1$, it follows

\begin{align*}
    & \int_{\mathcal{X}^{\mathbbm{N}}} P_e(A_{ki}|v) \mu(\prod dv_i) - \int_{\mathcal{X}^{\mathbbm{N}}} P_e(A_{\sigma(k)\sigma(i)}|v) \mu(\prod dv_i)\\
    = & \int_{\mathcal{X}^2} e(v_k,v_i) \mu_{ki}(dv_k dv_i) - \int_{\mathcal{X}^2} e(v_k,v_j) \mu_{kj}(dv_k dv_j)\\
    = & \int_{\mathcal{X}^2} e(v_k,v_i) \mu_{ki}(dv_k dv_i) - \int_{\mathcal{X}^2} e(v_k,v_j) \mu_{kj}(dv_k dv_j)\\
    = & \mu_{ki}(B) - \mu_{kj}(B) \neq 0.
\end{align*}

\end{proof}

The theorem above gives us both sufficient and necessary conditions for a countably infinite state random variables to induce a countable random graph model, given any $e \in \mathcal{E}$.\\

Furthermore, it is not hard to see the possibility of extending $\mathcal{E}$ to asymmetric functions, given the events on the product state space resulting the asymmetry lie within the null set of the joint measure $\mu$.\\

\begin{proposition}
Given a sequence of random variables $V = \{V_i\}_{i \in \mathbbm{N}}$, any function in the following set $$\{e \in [0,1]^{\mathcal{X} \times \mathcal{X}}|e(\cdot,\cdot) \text{ bi-measurable, } \mu(\{(x,y)|e(x,y)\neq e(y,x)\}) = 0\}$$ results in a countable random graph model $P_{G_e(V)}$.\\
\end{proposition}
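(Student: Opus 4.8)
The plan is to reduce the asymmetric case to the symmetric one already handled by Lemma 1 and Theorem 2: I would replace $e$ by a genuinely symmetric edge-generating function $\tilde e \in \mathcal{E}$ that agrees with $e$ outside the $\mu$-null asymmetry set, show that $e$ and $\tilde e$ induce the \emph{same} measure on the universal graph space, and then simply quote the known results for $\tilde e$. Concretely, write $D := \{(x,y) : e(x,y) \neq e(y,x)\}$ and $e^{\top}(x,y) := e(y,x)$, and set $\tilde e := \tfrac12\big(e + e^{\top}\big)$. Then $\tilde e$ is symmetric, $[0,1]$-valued, has measurable sections, and coincides with $e$ on $D^{c}$, so $\tilde e \in \mathcal{E}$. (One should note in passing that joint measurability of $e$ is what makes $D$ a measurable set and $P_e(\cdot\mid v)$ a measurable function of $v$ in the first place; I would assume this throughout, exactly as is already implicitly needed for $P_e(\cdot\mid v)$ to be well defined.)

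Next I would compare the two induced measures on each finite space $(\mathcal{G}_{[N]},\mathcal{B}_{\mathcal{G}_{[N]}})$. For fixed $N$ and $A\in\mathcal{B}_{\mathcal{G}_{[N]}}$, the kernels $P_e(A\mid v)=\prod_{(i,j)} e(v_i,v_j)^{A_{ij}}(1-e(v_i,v_j))^{1-A_{ij}}$ and $P_{\tilde e}(A\mid v)$ agree at every $v=(v_1,\dots,v_N)$ for which no pair falls in $D$, i.e.\ off the set $B_N := \bigcup_{i<j\le N}\{v:(v_i,v_j)\in D\}$. By the hypothesis each pairwise law of $V$ assigns $D$ measure zero, and there are only finitely many pairs, so $\mu(B_N)=0$ by a union bound; integrating against $\mu$ gives $P_{G_e(V)}(A)=P_{G_{\tilde e}(V)}(A)$. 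Since this holds for all $N$ and all cylinder events, the uniqueness part of the Kolmogorov extension theorem (equivalently, uniqueness of measure extension from the generating algebra of cylinders) forces $P_{G_e(V)}=P_{G_{\tilde e}(V)}$ on $(\mathcal{G}_{\mathbb{N}},\mathcal{B}_{\mathcal{G}_{\mathbb{N}}})$.

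Finally, because $\tilde e\in\mathcal{E}$, Lemma 1 together with the Kolmogorov-extension argument preceding Theorem 1 shows $P_{G_{\tilde e}(V)}$ is a bona fide probability measure on the universal graph space, and Theorem 2 shows it is invariant under $\mathrm{Sym}(\mathbb{N})$ — invoking here the standing assumption that $V$ is i.i.d., which is precisely the condition Theorem 2 demands for permutation-invariance. Transporting both properties back through the identity $P_{G_e(V)}=P_{G_{\tilde e}(V)}$ yields that $P_{G_e(V)}$ is a countable random graph model, as claimed.

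I expect the only genuine friction to be in the null-set bookkeeping and in pinning down what ``$\mu$'' refers to: the hypothesis is a statement about a joint law on $\mathcal{X}\times\mathcal{X}$, so one must be careful that it is the \emph{pairwise} marginals $\mu_{ij}$ of $V$ (not merely the product of one-dimensional marginals) that kill $D$, and that the i.i.d.\ hypothesis needed to quote Theorem 2's permutation-invariance conclusion is in force. Granting those points, the symmetrization and the reduction are routine.
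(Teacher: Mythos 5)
Your symmetrization argument is correct and is precisely the formalization of the paper's own justification: the paper states this proposition without any proof, offering only the preceding remark that asymmetry confined to a $\mu$-null set of the product state space is harmless, which is exactly what your reduction to $\tilde e = \tfrac12(e+e^{\top}) \in \mathcal{E}$ makes rigorous. Your added care about which measure ``$\mu$'' denotes (the pairwise laws of $V$, not just one fixed product measure) and about needing the independence/identical-distribution hypothesis of Theorem 2 to get permutation-invariance addresses genuine imprecisions that the paper itself glosses over.
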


Before the third main result, which proves the key connection between graphon and the countable random graph model and thereby constructs the proposed state-driven dynamic graphon model, we state a lemma from $\cite{TT}$. We will use the lemma to prove our second main result.\\

\begin{lemma}[$\cite{TT}$]
For any countable random graph model $P$, there exists a sequence of simple graphs $\{G_i\}_{i = 1}^{\infty}$ such that $\lim_{i \rightarrow \infty} t(\cdot,G_i) = P(\cdot)$ on $\mathcal{G}$.\\
\end{lemma}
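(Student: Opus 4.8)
The plan is to reduce the statement to two facts that are already available: the Aldous-Hoover representation of exchangeable (permutation-invariant) $\{0,1\}$-valued arrays, and the fact recalled in Section 2.2 that every graphon is the limit of a convergent sequence of finite simple graphs. First I would use Aldous-Hoover to write a $P$-distributed infinite random graph $\mathbf{G} = (Y_{k\ell})_{k<\ell}$ in the form $Y_{k\ell} = f(\alpha,\xi_k,\xi_\ell,\eta_{k\ell})$ with $\xi_k,\eta_{k\ell}$ i.i.d.\ uniform and $\alpha$ a single "directing" random variable; setting $W_\alpha(x,y) := \mathbb{P}(f(\alpha,x,y,\eta)=1)$ gives a graphon, and $\prod_{(u,v)\in E(F)}Y_{uv}$ has conditional expectation $t(F,W_\alpha)$ given $\alpha$. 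When $P$ is ergodic the variable $\alpha$ is $\mathbb{P}$-a.s.\ constant, so $W_\alpha \equiv W$ and $P(F) = \mathbb{E}\big[\prod_{(u,v)\in E(F)}Y_{uv}\big] = t(F,W)$. Then I would apply the Section 2.2 fact to $W$: there is a convergent graph sequence $\{G_i\}$ with $t(F,G_i)\to t(F,W) = P(F)$ for every finite $F$, which is exactly the claim.

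If one prefers an explicit construction of the $G_i$ rather than quoting the Section 2.2 fact, the same conclusion follows by \emph{sampling}: let $G_i := \mathbf{G}[\{1,\dots,i\}]$ be the restriction of the $P$-sample to the first $i$ vertices. Expanding $t(F,G_i)$ as an average over all maps $[\,|V(F)|\,]\to[i]$ of $\prod_{(u,v)\in E(F)}Y_{\phi(u)\phi(v)}$ and discarding the $O(1/i)$ fraction of non-injective maps, $t(F,G_i)$ is asymptotically a $U$-statistic in the i.i.d.\ variables $(\xi_k)$ (after conditioning on $\alpha$ and averaging out the $\eta_{k\ell}$), hence converges $\mathbb{P}$-a.s.\ to $t(F,W_\alpha)$ by the strong law of large numbers for $U$-statistics, or equivalently by the reverse-martingale property of subgraph-sampling densities under exchangeability. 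Since $\mathcal{G}$ is countable, on a single event of probability one this holds simultaneously for all $F$; fixing one realization on that event produces a deterministic sequence $\{G_i\}$ with $\lim_i t(F,G_i) = t(F,W_\alpha)$ for every $F$.

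The step I expect to be the real obstacle is the final identification of the limit with $P(\cdot)$ when $P$ is \emph{not} ergodic. A single deterministic finite-graph sequence can only converge to a graphon density vector $F\mapsto t(F,W)$, whereas $P\mapsto (P(F))_F$ is affine in $P$, so for a genuine mixture such as $P=\tfrac12\delta_{K_\infty}+\tfrac12\delta_{\bar K_\infty}$ (where $t(F,G_i)\in\{0,1\}$ for all $i$) no such sequence exists; the statement must therefore be read for ergodic $P$, or proved by first decomposing $P$ into ergodic components and recording that these are exactly the graphons. The remaining work — verifying the $U$-statistic / reverse-martingale convergence and the $O(1/i)$ error from non-injective maps, uniformly over the countable family of test graphs — is routine once the exchangeability structure has been exploited, so Aldous-Hoover together with the ergodicity caveat carries essentially all the weight of the proof.
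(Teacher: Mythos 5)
Your argument is correct, but it is a genuinely different route from the paper's: the paper's entire proof is a one-line citation of the Graph Correspondence Principle (Proposition~3.4 of the Tao reference), whereas you actually construct the sequence. Your second paragraph is the standard Lov\'asz--Szegedy ``$W$-random graphs converge to $W$'' argument: sample $G_i$ as the restriction of a $P$-distributed infinite graph to $[i]$, control the $O(1/i)$ contribution of non-injective maps, and get almost-sure convergence of $t(F,G_i)$ to $t(F,W_\alpha)$ simultaneously for the countably many test graphs $F$. That is a complete and correct proof in the ergodic (dissociated) case, and it buys something the citation does not: it makes visible exactly where the hypothesis of extremality is used.

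Your ``obstacle'' paragraph is not a pedantic caveat --- it is a genuine defect in the statement as the paper phrases it. With the paper's Definition of a countable random graph model ($P$ is \emph{any} permutation-invariant probability measure on $\mathcal{B}_{\mathcal{G}_{\mathbb{N}}}$), the measure $P=\tfrac12\delta_{\text{complete}}+\tfrac12\delta_{\text{empty}}$ is admissible, yet no deterministic graph sequence can converge to it: homomorphism densities are exactly multiplicative over disjoint unions, $t(F_1\sqcup F_2,G)=t(F_1,G)\,t(F_2,G)$, so any limit of $t(\cdot,G_i)$ is multiplicative, while this $P$ assigns $\tfrac12$ both to a single edge and to two disjoint edges. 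So the lemma holds only for ergodic $P$ (equivalently, for the extreme points of the simplex of exchangeable measures, which by Aldous--Hoover are exactly the graphon-induced models), and the same restriction is needed for the surjectivity half of the paper's Theorem~3. Your proof, with the ergodic decomposition made explicit, is therefore more trustworthy than the paper's bare citation; the only thing to add is to state the ergodicity hypothesis in the lemma itself rather than leaving it implicit.
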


\begin{proof}
This is the Graph Correspondence Principle, Proposition 3.4, in $\cite{TT}$.\\
\end{proof}

\begin{theorem}
There exists a bijective map between graphons and countable random graph models.\\
\end{theorem}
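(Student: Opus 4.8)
The plan is to exhibit the bijection explicitly in both directions and then check that the two compositions are identities. Going from graphons to countable random graph models, I would send an equivalence class $[w]$ to the law of the $w$-random graph $\mathbb{G}(\infty,w)$: draw $X_1,X_2,\dots$ i.i.d.\ uniform on $[0,1]$, and conditionally on $(X_i)$ place an edge between $i$ and $j$ independently with probability $w(X_i,X_j)$. This is a probability measure $\Phi([w])$ on $(\mathcal{G}_{\mathbb{N}},\mathcal{B}_{\mathcal{G}_{\mathbb{N}}})$, and since $(X_i)$ is i.i.d.\ the construction is invariant under permutations of $\mathbb{N}$, so $\Phi([w])$ is a countable random graph model (that it is a genuine measure follows from Lemma 1 together with the Kolmogorov extension argument already used in the paper). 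The first thing to verify is that $\Phi$ is well-defined on equivalence classes: the finite-dimensional distributions of $\mathbb{G}(\infty,w)$ — the probabilities of seeing a prescribed graph $F$ on the first $|V(F)|$ vertices — are determined, via inclusion–exclusion over the subgraph lattice, by the homomorphism densities $\{t(F,w)\}_{F\in\mathcal{G}}$; and by the result quoted from \cite{LL}, $t(\cdot,w_1)=t(\cdot,w_2)$ whenever $\delta_{\square}(w_1,w_2)=0$. Hence $\Phi([w])$ depends only on $[w]$.

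For the reverse direction, given a countable random graph model $P$, apply Lemma 2 (the Graph Correspondence Principle) to obtain finite simple graphs $\{G_i\}$ with $t(\cdot,G_i)\to P(\cdot)$ on $\mathcal{G}$. Since $\{G_i\}$ is then a convergent graph sequence in the sense of Section 2, the graph limit theory of \cite{LL} furnishes a graphon $w$ with $t(F,w)=\lim_i t(F,G_i)=P(F)$ for every finite graph $F$; set $\Psi(P):=[w]$. Well-definedness of $\Psi$ follows because the limit values $\{t(F,w)\}$ are forced by $P$, and those values pin down $[w]$ by the same $t\leftrightarrow\delta_{\square}$ equivalence.

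It remains to check that $\Psi$ and $\Phi$ are mutually inverse. For $\Phi\circ\Psi=\mathrm{id}$: starting from $P$, a representative $w$ of $\Psi(P)$ satisfies $t(F,w)=P(F)$ for all $F$, so $\mathbb{G}(\infty,w)$ has exactly the same finite-dimensional distributions as $P$ — both recovered from $\{t(F,\cdot)\}$ by the same inclusion–exclusion — and since $\mathcal{B}_{\mathcal{G}_{\mathbb{N}}}$ is generated by the cylinder (finite induced-subgraph) events, the two measures coincide, i.e., $\Phi(\Psi(P))=P$. For $\Psi\circ\Phi=\mathrm{id}$: starting from $[w]$, the model $P=\Phi([w])$ has $t(F,P)=t(F,w)$ for all $F$, so any graph sequence produced by Lemma 2 converges to a graphon $w'$ with $t(F,w')=t(F,w)$ for all $F$, whence $\delta_{\square}(w,w')=0$ and $[w']=[w]$; thus $\Psi(\Phi([w]))=[w]$.

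The step I expect to be the main obstacle — and the one deserving the most care — is the bookkeeping that links three a priori different objects: the homomorphism densities $t(F,\cdot)$ used to define graph convergence, the induced-subgraph sampling probabilities that form the finite-dimensional distributions of a countable random graph model, and the measure on $\mathcal{G}_{\mathbb{N}}$ itself. One must establish that (i) each family determines the next — homomorphism densities $\leftrightarrow$ induced-subgraph densities by Möbius inversion over the subgraph lattice, and induced-subgraph densities $\leftrightarrow$ the measure by Carath\'eodory/Kolmogorov on the generating cylinder sets — and (ii) the convergence ``$t(\cdot,G_i)\to P(\cdot)$ on $\mathcal{G}$'' supplied by Lemma 2 is precisely the notion of graph-sequence convergence from Section 2, so that the limit graphon from \cite{LL} is legitimately available. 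Everything else (exchangeability of $\mathbb{G}(\infty,w)$, measurability, and the use of the quoted $t\leftrightarrow\delta_{\square}$ equivalence) is then routine.
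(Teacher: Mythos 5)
Your proof follows essentially the same route as the paper's: injectivity/well-definedness rests on the equivalence $\delta_{\square}(w_1,w_2)=0 \iff t(\cdot,w_1)=t(\cdot,w_2)$ from \cite{LL}, and surjectivity on the Graph Correspondence Principle combined with the existence of a limit graphon for a convergent graph sequence. If anything, your version is more careful than the paper's, since you make explicit the identification between the homomorphism-density data $t(\cdot,w)$ and an actual permutation-invariant probability measure on $\mathcal{G}_{\mathbb{N}}$ (via the $\mathbb{G}(\infty,w)$ sampling construction and the M\"obius-inversion/cylinder-set bookkeeping, plus the verification that the two maps are mutually inverse), steps the paper passes over with ``it is not hard to verify.''
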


\begin{proof}
Let $w \in (\mathcal{W},\delta_{\square})$ be arbitrary. By Corollary 10.34 in \cite[p.170]{LL}, $$\delta_{\square}(w_1,w_2) = 0 \iff t(G,w_1) = t(G,w_2), \forall G \in \mathcal{G},$$ the map $W \rightarrow t(\cdot,w)$ is a bijection. Also, it is not hard to verify that $t(\cdot,w)$ is a permutation-invariant local probability measure on $\mathcal{G}$ and therefore a countable random graph model. We conclude the map $w \rightarrow t(\cdot,w) \rightarrow P$ is an injective map from graphons to countable random graph models. Again, we don't differentiate $G \in \mathcal{G}$ for $t(\cdot,w)$ and $A \in \mathcal{B}_{G_{\mathbbm{N}}}$ for $P$.\\

On the other hand, let $P: \mathcal{B}_{G_{\mathbbm{N}}} \rightarrow [0,1]$ be an arbitrary countable random graph model. By Lemma 3, $\exists \{G_i\}_{i \in \mathbb{N}}$ such that $t(\cdot,G_i) = P(\cdot)$. Therefore, $\exists w \in (\mathcal{W},\delta_{\square})$ such that $t(\cdot,W) = \lim_{n \rightarrow \infty} t(\cdot,G_n) = t(\cdot,w)$. Hence, the map $w \rightarrow t(\cdot,w) \rightarrow P$ is surjective. We are done.\\
\end{proof}

Intuitively, graphons and countable random graph models are two explicit forms of the probability distributions on the set of simple graphs $\mathcal{G}$. With the above theorem, we prove a one-to-one relationship between the two explicit representative forms that are different in nature.\\

If $\{G_e(V_t)\}_{t \in \mathbb{N}}$ is a dynamical countable random graph model, we can consider it directly as a dynamical graphon $\{w_{e,t}\}_{t \in \mathbb{N}}$, where $w_{e,t}: [0,1] \times [0,1] \rightarrow [0,1]$ is a graphon for a fixed $t$, due to the existence of the above bijection $w_{e,t} \rightarrow t(\cdot,w_{e,t}) \rightarrow P_{G_e(V_t)}(\cdot) \rightarrow G_e(V_t)$.\\

Therefore, it remains to show $\{G_e(V_t)\}_{t \in \mathbb{N}}$ is a well-defined countable random graph model process for all $e \in \mathcal{E}$ and all $V_t = V_t^\mathbb{N}$. But for each fixed $t \in \mathbb{N}$, such $G_e(V_t)$ indeed define a countable random graph model by Theorem 2. Furthermore, since $(V_{i,t})_t$ are all random processes and the consistency conditions in the Kolmogorov extension theorem are satisfied, $G_e(V_t)$ also satisfies the consistency conditions. Therefore, we have the following result:\\

\begin{corollary}
For fixed $t \in \mathbb{N}$, $G_e(V_t)$ generates a graphon $w_{e,t}$ for any $e \in \mathcal{E}$ if and only if $\{V_{i,t}\}_{i \in \mathbb{N}}$ consists of independent random variables that are equal $\mu_t-a.e.$.\\
\end{corollary}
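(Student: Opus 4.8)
The plan is to reduce the statement to the combination of Theorem~2 and Theorem~3, which already carry all the substantive content; the rest is bookkeeping. First I would fix $t \in \mathbb{N}$ and unwind what "$G_e(V_t)$ generates a graphon $w_{e,t}$" means: through the bijective chain $w_{e,t} \to t(\cdot,w_{e,t}) \to P_{G_e(V_t)} \to G_e(V_t)$ recorded just after Theorem~3, $G_e(V_t)$ generates a graphon exactly when $P_{G_e(V_t)}$ is a countable random graph model, i.e.\ a permutation-invariant probability measure on $(\mathcal{G}_{\mathbb{N}},\mathcal{B}_{\mathcal{G}_{\mathbb{N}}})$. By Lemma~1 applied at the fixed time $t$, together with the Kolmogorov-consistency observation already used for the dynamical model, $P_{G_e(V_t)}$ is in any case a well-defined probability measure on the universal graph space; so the only property in question is permutation invariance.

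For the "if" direction, suppose $\{V_{i,t}\}_{i\in\mathbb{N}}$ consists of independent random variables that are equal $\mu_t$-a.e. Then the sufficiency half of Theorem~2 shows that for every $e\in\mathcal{E}$ the measure $P_{G_e(V_t)}$ is $\sigma$-invariant for all $\sigma\in Sym(\mathbb{N})$, hence a countable random graph model, hence — by the bijection of Theorem~3 — the image of a unique graphon $w_{e,t}$. For the "only if" direction I would argue by contraposition: if $\{V_{i,t}\}_{i\in\mathbb{N}}$ is not a family of independent random variables equal $\mu_t$-a.e., then the necessity half of Theorem~2 produces a specific $e\in\mathcal{E}$ (built from the set $B$ witnessing the failure of exchangeability) for which $P_{G_e(V_t)}$ is not permutation-invariant; for that $e$, $P_{G_e(V_t)}$ lies outside the range of the Theorem~3 bijection, so $G_e(V_t)$ generates no graphon, which negates "$G_e(V_t)$ generates a graphon for every $e\in\mathcal{E}$".

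The point needing the most care is not any computation but the reading of the quantifiers: the left-hand side of the claimed equivalence must be interpreted as "for all $e\in\mathcal{E}$", so that the single adversarial $e$ constructed in Theorem~2 is enough to drive the necessity direction; and one must confirm that the correspondence $P\mapsto w$ of Theorem~3 genuinely applies to the measures $P_{G_e(V_t)}$ arising here, i.e.\ that they really are consistent (local) probability measures on $\mathcal{G}$, which is precisely where Lemma~1 and the Kolmogorov extension step are invoked. Granting those, the corollary follows by immediate substitution of Theorems~2 and~3.
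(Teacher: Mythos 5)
Your proposal is correct and follows essentially the same route as the paper, whose own proof is the one-line observation that the corollary follows from the injective map $P_{G_e(V_t)} \rightarrow t(\cdot,w_{e,t}) \rightarrow w_{e,t}$ together with Theorem~2. You have merely expanded that combination into its two halves (sufficiency and necessity of Theorem~2, routed through the Theorem~3 bijection), with the quantifier reading of ``for any $e\in\mathcal{E}$'' handled as the paper intends.
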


\begin{proof}
It follows from the injective map $P_{G_e(V_t)} \rightarrow t(\cdot,w_{e,t}) \rightarrow w_{e,t}$ and Theorem 2.\\
\end{proof}

Note: the condition is easy to satisfy. For example, any i.i.d sequence of random variables would satisfy it.\\

Now, we show that the generalization (2) is enough for the proposed state-driven dynamic graphon model to cover all graphons. To that end, we need the celebrated Aldous-Hoover representation for jointly exchangeable 2-dimensional random array in 1983.\\

\begin{definition}
Given $\{X_{(i,j)}\}_{(i,j) \in \mathbbm{N}^2}$ be a 2-dimensional random arrays. If $\{X_{(i,j)}\}_{(i,j)} =^{d} \{X_{(\sigma(i),\sigma(j))}\}_{(i,j)}, \forall \sigma \in Sym(\mathbb{N})$, then we say that $\{X_{(i,j)}\}_{(i,j)}$ is a jointly exchangeable 2-dimensional random arrays.\\
\end{definition}

Here, we only states the jointly-exchangeable 2-dimensional random array as it is directly related to graphon and countable random graph model. For more generalized results on the k-dimensional exchangeable random arrays, we refer readers to $\cite{TA}$ for detailed explanation.\\

Let $\Tilde{\mathcal{E}} := \{e \in [0,1]^{[0,1]^2}: e(x,y) = e(y,x), \forall x,y \in [0,1]\}$, $\{w(U_i,U_j)\}_{(i,j)}$ be the countable random graph model where $U_i$ are i.i.d. random variables from $Unif([0,1])$ and $P(\{(i,j) \in E\})$ is $Bern(w(U_i,U_j))$.\\

\begin{lemma}[Aldous-Hoover]
Let $X := \{X_{ij}\}_{(i,j) \in \mathbb{N}^2}$ be a jointly exchangeable 2-dimensional binary random array, then exists a probability measure $\mu_w$ on $\Tilde{\mathcal{E}}$, such that for any $E \in \mathcal{B}_{\mathcal{G}_{\mathbb{N}}}$ $$\mathbb{P}(X \in E) = \int_{\Tilde{\mathcal{E}}} \mathbb{P}(\{w(U_i,U_j)\}_{(i,j) \in \mathbb{N}^2} \in E) \mu_w(dw).$$
\end{lemma}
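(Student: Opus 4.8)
My plan is to quote the one genuinely deep ingredient of Aldous--Hoover --- the classification of dissociated binary arrays --- and to derive the stated mixture representation from it using the ergodic decomposition of $\mathrm{Sym}(\mathbb{N})$-invariant measures together with the fact, already behind Theorem 5, that homomorphism densities determine the graphon. First I would restate the hypothesis in the language of Section 3: regarding a jointly exchangeable binary array as symmetric with zero diagonal, its law $P:=\mathrm{Law}(X)$ is exactly a countable random graph model on $(\mathcal{G}_{\mathbb{N}},\mathcal{B}_{\mathcal{G}_{\mathbb{N}}})$ in the sense of Definition 8, and the target identity is the assertion that $P$ is a mixture of the $W$-random-graph laws $Q_w(\cdot):=\mathbb{P}(\{w(U_i,U_j)\}_{(i,j)}\in\cdot)$.

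Second, I would apply the extreme-point (ergodic) decomposition for the action of $\mathrm{Sym}(\mathbb{N})$ on $\mathcal{G}_{\mathbb{N}}$. Under the product topology $\mathcal{G}_{\mathbb{N}}$ is compact metrizable, so the countable random graph models form a weak-$\ast$ compact, metrizable, convex set; the invariant probability measures of a group action always form a simplex, so Choquet's theorem yields a probability measure $\nu$ supported on the extreme (equivalently, $\mathrm{Sym}(\mathbb{N})$-ergodic) models with $P(E)=\int Q(E)\,\nu(dQ)$ for every $E\in\mathcal{B}_{\mathcal{G}_{\mathbb{N}}}$.

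Third --- and this is where the real work sits --- I would show that every extreme $Q$ equals $Q_w$ for some $w\in\tilde{\mathcal{E}}$. An ergodic exchangeable array is \emph{dissociated} (sub-arrays on disjoint index sets are independent), and the classification of dissociated symmetric binary arrays as precisely the $W$-random graphs $\mathbb{G}(\infty,w)$ is the substantive content of Aldous--Hoover, which I would quote from \cite{TA} rather than reprove. I expect this to be the main obstacle, and it cannot be circumvented by the graph-limit results in the excerpt: Theorem 5 only tells us that the expectation functional $Q\mapsto t(\cdot,Q)$ pins down a graphon, not that $Q$ itself equals that graphon's sampling law --- which holds exactly when $Q$ is ergodic. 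A self-contained alternative would reprove the coding lemma, $X_{ij}=f(\xi_\emptyset,\xi_i,\xi_j,\xi_{\{i,j\}})$ with i.i.d.\ uniform arguments and $f$ symmetric in $(\xi_i,\xi_j)$, via iterated de Finetti on the rows and then integrate out $\xi_{\{i,j\}}$ in the binary case; but that essentially duplicates Aldous's original argument.

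Fourth, I would assemble. Since $\delta_\square(w_1,w_2)=0$ iff $w_1,w_2$ have equal homomorphism densities iff $Q_{w_1}=Q_{w_2}$ (the content behind Theorem 5, via Corollary 10.34 of \cite{LL}), the graphon attached to an extreme $Q$ is well defined modulo weak isomorphism, and because that quotient is standard Borel a measurable representative selection $Q\mapsto w_Q\in\tilde{\mathcal{E}}$ exists. Setting $\mu_w:=(Q\mapsto w_Q)_\ast\nu$ then gives, for every $E$,
$$\mathbb{P}(X\in E)=P(E)=\int Q(E)\,\nu(dQ)=\int_{\tilde{\mathcal{E}}}\mathbb{P}\bigl(\{w(U_i,U_j)\}_{(i,j)}\in E\bigr)\,\mu_w(dw),$$
which is the claim. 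Only the third step is non-routine; for it I would defer to the literature.
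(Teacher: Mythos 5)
The paper gives no proof of this lemma at all: it is imported wholesale as the celebrated Aldous--Hoover representation, with \cite{TA} as the pointer for exchangeable arrays, and is used downstream as a black box. Your proposal is therefore not wrong so much as doing more than the paper does, and doing it in a reasonable way: the decomposition into (i) ergodic/Choquet decomposition of the $\mathrm{Sym}(\mathbb{N})$-invariant laws on the compact metrizable space $\mathcal{G}_{\mathbb{N}}$, (ii) identification of the extreme points with the dissociated arrays and hence with the $W$-random graph laws $Q_w$, and (iii) push-forward of the mixing measure through a measurable selection $Q \mapsto w_Q$ is the standard modern route to exactly this mixture form of the theorem. You are also right, and it is worth saying explicitly, that the paper's Theorem 5 (the bijection between graphons and countable random graph models via homomorphism densities) cannot substitute for step (ii): it identifies the graphon whose densities match those of an invariant law $Q$, but equality of densities gives $Q = Q_{w}$ only when $Q$ is ergodic, which is precisely the dissociated case. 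The honest accounting in your third step --- that the classification of dissociated symmetric binary arrays is the irreducible core and must be quoted or reproved via the coding lemma $X_{ij} = f(\xi_\emptyset,\xi_i,\xi_j,\xi_{\{i,j\}})$ --- means your argument ultimately rests on the same external theorem the paper cites, but your version makes visible which portion is soft functional analysis and which portion is the genuine content. Two small cautions: the simplex structure of the invariant measures should be justified by the amenability of the locally finite group $\mathrm{Sym}_{\mathrm{fin}}(\mathbb{N})$ (or by quoting the ergodic decomposition for exchangeable structures directly), and the measurable selection of a representative $w_Q \in \Tilde{\mathcal{E}}$ from each weak-isomorphism class needs the compactness of $(\mathcal{W},\delta_{\square})$ or a measurable-selection theorem; both are standard but should be flagged rather than waved through.
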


Therefore, it follows from the Aldous-Hoover representation theorem, the proposed state-driven dynamic graphon model covers all the possible graphon trajectories, by adding randomness on the edge-generating function.\\

Now, we propose a generalization to the original state-driven dynamic graphon model. In particular, we add another level of randomness on the the choice of $e$ by placing a distribution on $\mathcal{E}$. For each fixed $e \in \mathcal{E}$, we apply the original state-driven approach to obtain $P_{G_e(V_t)}$. Now, we are ready to show that the generalized state-driven dynamic graphon model cover all graphons.\\

\begin{theorem}
For every dynamic graphon $(w_t)_t$, there exists an i.i.d. sequence of random processes $\{(V_{i,t})_t\}_i$ and a probability measure $\mu_e$ on $\mathcal{E}$, such that the generalized state-driven dynamic graphon model induces a of probability measures $\{P_{G_e(V_t)}\}_{w \sim \mu_e}$ on $\mathcal{B}_{G_{\mathbbm{N}}}$ that coincides with $t(\cdot,w)$ on $\mathcal{G}$.\\
\end{theorem}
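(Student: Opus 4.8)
The plan is to handle the dynamics one time-stage at a time: for each fixed $t$ I would realize the graphon $w_t$ as (a mixture of) $w$-random graphs, using the bijection of Theorem 3 together with the Aldous--Hoover representation, and then glue the stages back together into a single i.i.d. family of state processes and a single mixing measure on $\mathcal{E}$ by absorbing the time index into the state space.

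\emph{Step 1 — reduce to a single stage.} Fix the target $(w_t)_{t\in\mathbb{N}}$. For each $t$, Theorem 3 identifies $w_t$ with the countable random graph model $P_t:=t(\cdot,w_t)$; since $P_t$ is permutation-invariant, its binary array is jointly exchangeable, so the Aldous--Hoover representation gives a probability measure $\mu_{w_t}$ on $\widetilde{\mathcal{E}}$ with $P_t(E)=\int_{\widetilde{\mathcal{E}}}\mathbb{P}\big(\{w(U_i,U_j)\}_{(i,j)}\in E\big)\,\mu_{w_t}(dw)$ for all $E\in\mathcal{B}_{\mathcal{G}_{\mathbb{N}}}$. (In fact $\mu_{w_t}=\delta_{w_t}$ already works, since $w_t$ is itself an edge-generating function on $[0,1]^2$; Aldous--Hoover is what makes this available for an arbitrary exchangeable presentation.)

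\emph{Step 2 — a uniform-in-$t$ construction and the verification.} I would set $\mathcal{X}:=\mathbb{N}\times[0,1]$, take $(U_i)_{i\in\mathbb{N}}$ i.i.d. $\mathrm{Unif}([0,1])$, and let $V_{i,t}:=(t,U_i)$; as random elements of the path space $\mathcal{X}^{\mathbb{N}}$ the processes $\{(V_{i,t})_t\}_i$ are i.i.d. (their $t$-dependence sits entirely in the deterministic first coordinate). Then let $(w^{(t)})_{t}$ be independent with $w^{(t)}\sim\mu_{w_t}$, put $e\big((s,x),(t,y)\big):=\mathbbm{1}[s=t]\,w^{(s)}(x,y)$ — a symmetric, bi-measurable element of $\mathcal{E}$ — and let $\mu_e$ be the law of this random $e$. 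For $A\in\mathcal{G}_{[N]}$ the generalized model's time-$t$ marginal is $\bar P_t(A):=\int_{\mathcal{E}}P_{G_e(V_t)}(A)\,\mu_e(de)$, and since $e(V_{i,t},V_{j,t})=w^{(t)}(U_i,U_j)$ this unwinds, via the Bernoulli-product definition of $P_e(\cdot\mid v)$ and Fubini, to $\int_{\widetilde{\mathcal{E}}}\int_{[0,1]^N}\prod_{(i,j)}w(u_i,u_j)^{A_{ij}}(1-w(u_i,u_j))^{1-A_{ij}}\prod_i du_i\,\mu_{w_t}(dw)=P_t(A)=t(A,w_t)$. As $\mathcal{B}_{\mathcal{G}_{\mathbb{N}}}$ is generated by finite-graph cylinders this gives coincidence on $\mathcal{G}$, and the trajectory is Kolmogorov-consistent exactly as in Theorem 1.

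The routine parts are the measurability of $(w^{(t)})_t\mapsto e$ into $\mathcal{E}$ and of the sections of $e$ (each $w_t$ is Borel), together with the Fubini bookkeeping. I expect the real obstacle to be the gluing: the statement demands one state family and one $\mu_e$ that serve \emph{all} times at once, which is what forces something like the enlargement $\mathcal{X}=\mathbb{N}\times[0,1]$ — one could instead keep $\mathcal{X}=[0,1]$ and encode $t$ through a Borel isomorphism of $[0,1]$ with a countable disjoint union of copies of itself, but a device of this kind seems unavoidable — and one must be careful to read "i.i.d. sequence of random processes" at the level of path laws so that the deterministic time-coordinate does not break independence. A secondary point worth nailing down is that $P_t=t(\cdot,w_t)$ is genuinely the law on $\mathcal{B}_{\mathcal{G}_{\mathbb{N}}}$ induced by the $w_t$-random graph — i.e. that matching homomorphism densities on all finite $F$ upgrades to matching measures — which is exactly what Theorem 3 and the Graph Correspondence Principle of \cite{TT} already package.
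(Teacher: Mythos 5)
Your proposal is correct and follows the route the paper intends --- combine the graphon/countable-random-graph-model bijection with the Aldous--Hoover representation --- but note that the paper never actually writes a proof of this theorem: it only states the Aldous--Hoover lemma and asserts that the coverage ``follows.'' What you add that the paper omits entirely is the gluing across time stages (the enlarged state space $\mathcal{X}=\mathbb{N}\times[0,1]$ with $e((s,x),(t,y))=\mathbbm{1}[s=t]\,w^{(s)}(x,y)$ and a single mixing measure $\mu_e$), which is exactly the part of the statement that requires an argument; your observation that $\mu_{w_t}=\delta_{w_t}$ already suffices for a prescribed graphon is also correct and shows Aldous--Hoover is only needed for the converse direction. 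One small caution: the paper's generalization (ii) as written draws $e_{ij}$ independently \emph{per pair}, whereas your construction (and Aldous--Hoover) uses a single global draw of $e$ --- your reading is the one that makes the theorem true, but you should flag that discrepancy with the paper's setup.
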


Thus, we showed that the proposed state-driven approach is natural from the graph limit definition perspective. The above theorem also shows that a generalization of the state-driven graphon model is sufficient to generate all $w \in (\mathcal{W},\delta_{\square})$.\\

In the rest of the current section, we show that the proposed state-driven graphon model is a generalization of the Erdős–Rényi model and the stochastic block model.\\

\begin{exmp}[Erdős–Rényi model]
Let $V := \{V_i\}_{i =1}^n$ be a finite set of i.i.d. random variables, $e \equiv p$, then $G_e(V)$ on $\mathcal{B}_{G_{\mathbbm{n}}}$ is equal to the $G(n,p)$ model on $\mathcal{G}_{n}$.\\
\end{exmp}

\begin{exmp}[stochastic block model]
Let $V := \{V_i\}_{i =1}^n$ be a finite set of i.i.d. random variables with discrete state space $[r]$, $e := \{p_{ij}\}_{1 \leq i,j \leq r}$, then $G_e(V)$ on $\mathcal{B}_{G_{\mathbbm{n}}}$ is equal to the stochastic block model on $\mathcal{G}_{n}$ with $C_k = \{i: V_i = k\}$ and edge probability $p$.\\
\end{exmp}

\section{Dynamic Graphon Analysis}

One reason of developing the state-driven approach is to study the behavior of the dynamic random network via the knowledge of the state random processes. In this section, we explore some useful properties of the state-driven dynamic graphon:\\

\begin{itemize}
    \item If $(V_t)_t$ is a stationary process, then the positive possibility of a graph $A \in \mathcal{B}_{G_{\mathbbm{N}}}$ occurring at some stage implies that $A$ occurs infinitely often.
    \item If $(V_t)_t$ is a weakly-mixing process, then the time average of the graphon model converges to a specific random graph model that is invariant of the state-randomness.\\
\end{itemize}

\begin{definition}
A discrete-time dynamical system is generated by iterations of a map $f : X \circlearrowleft$. For any $x \in X$, $\mathcal{O}_+(x) := \{x,fx,f^2x,...\}$ is called a forward orbit of $x$.\\
\end{definition}

Given a random process $(V_t)_t$, let $(\mathcal{X}^{\mathbbm{N}},\mathcal{B}_{\mathcal{X}}^{{\otimes \mathbbm{N}}},\mathbbm{P} \circ V_t^{-1})$ be the space of actions ($X$) in the above definition. Also, let $T: (\mathcal{X}^{\mathbbm{N}},\mathcal{B}_{\mathcal{X}}^{{\otimes \mathbbm{N}}},\mathbbm{P} \circ V_t^{-1}) \circlearrowleft$ be the shift operator: $$T(\{A_1,A_2,A_3,...\}) := \{A_2,A_3,A_4,...\}.$$ That is, $T$ push the time forward by one discrete period.\\

Now, let $C := \{A_1,A_2,...,A_n\} \in \mathcal{B}_{\mathcal{X}}^{{\otimes \mathbbm{N}}}$ be an arbitrary cylinder set. It follows that $$\mathbbm{P} \circ V_t^{-1}(T^{-k}(C)) = \mathbbm{P}(V_{k+1} \in A_1,V_{k+2} \in A_2,...,V_{k+n} \in A_n).$$

To prove the first statement, we need to define measure-preserving transformation (mpt).\\

\begin{definition}
Given any two probability spaces $(X,\mathcal{B}_X,\mu_x)$ and $(Y,\mathcal{B}_Y,\mu_y)$, a map $f: X \rightarrow Y$ is an mpt if $\forall B \in \mathcal{B}_Y, \mu_y(B) = \mu_x(f^{-1}(B))$.\\
\end{definition}

Now, we show that if the state random process is stationary, then for a reasonable choice of $e \in \mathcal{E}$, the resulting dynamic graphon model $w_{e,t}$ has the tendency to repeat its history.\\

\begin{theorem}
Given a dynamic graphon model $(w_{e,t})_t$ that is generated by a sequence of i.i.d. stationary random processes $(V_t)_t$ and $e \in \mathcal{E}$, it follows: $$\exists t < \infty, t(G,w_{e,t}) > 0 \implies (w_{e,t})_t|_{|V(G)|} \text{ goes back to $G$ infinitely often. }$$
\end{theorem}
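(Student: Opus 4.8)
The plan is to recognize this statement as a form of the Poincar\'e recurrence theorem for the shift $T$ on the state-trajectory space, which is measure-preserving precisely because $(V_t)_t$ is stationary — as the cylinder-set computation above shows, stationarity gives $\mathbb{P}\circ V_t^{-1}(T^{-k}C) = \mathbb{P}\circ V_t^{-1}(C)$ for every cylinder $C$, so $T$ is an mpt in the sense of Definition~9. First I would fix $n := |V(G)|$ and reduce the hypothesis to a positive-probability statement about a single labelled copy of $G$ on the first $n$ vertices. Since $(V_{i,t})_{i\in\mathbb{N}}$ is i.i.d.\ for each fixed $t$, $G_e(V_t)$ is a genuine countable random graph model (Corollary~1), so $t(G,w_{e,t})$ equals the probability under $P_{G_e(V_t)}$ of drawing $n$ vertices and obtaining a graph isomorphic to $G$; by exchangeability of the vertices this is positive if and only if, for at least one of the finitely many labellings $G'$ on $[n]$ in the isomorphism class of $G$, the induced labelled subgraph on $[n]$ equals $G'$ with positive probability. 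Fix such a $G'$.

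Next I would enlarge the trajectory space to carry both the states $(V_{i,t})_{i\le n,\,t}$ and the edge variables $(Y_{ij,t})_{1\le i<j\le n,\,t}$, under the law of the full process. This process is stationary: the states are stationary by hypothesis, and conditionally on the states at time $t$ the $Y_{ij,t}$ are independent $\mathrm{Bern}(e(V_{i,t},V_{j,t}))$ and independent across $t$, so the pair process inherits stationarity. Let $\widehat T$ be the one-step shift on this space; exactly as above $\widehat T$ is an mpt. Put
\begin{equation*}
A_{G'} := \{\omega : \text{the graph on } [n] \text{ at time } 1 \text{ equals } G'\},
\end{equation*}
so that by the previous step and stationarity the hypothesis yields $\mathbb{P}(A_{G'}) > 0$, the value of $\mathbb{P}(\widehat T^{-t}A_{G'})$ being independent of $t$.

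Finally I would invoke Poincar\'e recurrence: applied to the mpt $\widehat T$ and the positive-measure set $A_{G'}$, it gives that $\mathbb{P}$-almost every $\omega \in A_{G'}$ satisfies $\widehat T^{k}\omega \in A_{G'}$ for infinitely many $k$. Unwinding $\widehat T$ and $A_{G'}$, this says that once $G'$ — hence a copy of $G$ — appears on $[n]$, it appears again at infinitely many later times, almost surely. Transporting back along the bijections $G_e(V_t)\leftrightarrow t(\cdot,w_{e,t})\leftrightarrow w_{e,t}$ of Theorem~3 restated at each fixed $t$ (Corollary~1) gives the assertion for $(w_{e,t})_t$ restricted to $|V(G)|$ vertices.

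The step I expect to be the main obstacle is the reduction in the first paragraph: one must make the informal restriction $(w_{e,t})_t|_{|V(G)|}$ precise and verify that positivity of the \emph{homomorphism density} $t(G,w_{e,t})$ genuinely forces a positive-\emph{probability} realization of an actual labelled subgraph (rather than only a positive-measure set of states that still realize $G$ with conditional probability zero). This is legitimate here only because the edges are drawn by honest Bernoulli variables $Y_{ij,t}$, so this is exactly where the structure of the state-driven model is used; everything afterwards is the textbook recurrence theorem, the only extra care being the verification that the joint (states, edges) process is stationary so that $\widehat T$ preserves the probability measure.
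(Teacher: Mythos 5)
Your proposal is correct and shares the paper's core mechanism --- stationarity makes the shift a measure-preserving transformation, and Poincar\'e recurrence supplies the ``infinitely often'' --- but it diverges from the paper at the decisive final step, and the divergence is substantive. The paper applies Poincar\'e recurrence only to the \emph{state} process: it extracts a positive-measure set $B_G \subset \mathcal{X}^{|V(G)|}$ on which the conditional probability $P_e(A_G\mid v_1,\dots,v_{|V(G)|})$ is positive, concludes that the state vector returns to $B_G$ infinitely often, and then asserts that ``strictly positive probability of $A_G$ at every such time'' yields that $A_G$ itself occurs infinitely often. As written, that last inference is a gap: positive conditional probability at infinitely many random times does not by itself force infinitely many occurrences; one needs to choose $B_G$ so that the conditional probability is bounded below by some $\epsilon>0$ and then invoke a conditional second Borel--Cantelli argument using the independence of the edge draws across time given the states. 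Your proposal sidesteps this entirely by enlarging the dynamical system to the joint (states, edges) trajectory space, verifying that the pair process is stationary, and applying Poincar\'e recurrence directly to the event $A_{G'}$ that the labelled graph on $[n]$ equals a fixed representative $G'$. What your route buys is that the recurrence theorem delivers the actual occurrence of the graph infinitely often, with no auxiliary Borel--Cantelli step; what it costs is the (routine but necessary) verification that the augmented shift $\widehat T$ preserves the joint law, which you correctly flag. The reduction you identify as the main obstacle --- passing from positivity of the homomorphism density $t(G,w_{e,t})$ to a positive-probability labelled realization --- is treated at the same level of informality in the paper (via the unexplained correspondence $G \mapsto A_G$), so you are not missing anything the paper supplies. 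On balance your argument is the more complete of the two.
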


Here, an arbitrary sampling dynamic $|V(G)|$-subgraph from $(w_{e,t})_t$ is denoted by $(w_{e,t})_t|_{|V(G)|}$.\\

\begin{proof}
By the assumption, we have $(V_t)_t$ consists of i.i.d. stationary random processes. That is, $\forall i \in \mathbbm{N}$, the shift operator $T: (\mathcal{X},\mathcal{B}_{\mathcal{X}}, \mathbbm{P} \circ V_{i,t}^{-1}) \circlearrowleft$ is an mpt.\\

Now, given any $G \in \mathcal{G}$, we find the corresponding $A_G \in \mathcal{B}_{G_{\mathbbm{N}}}$. Also, $t(G,w_{e,t}) > 0 \implies P_{G_e(V_t)}(A_G) > 0$. But

\begin{align*}
    P_{G_e(V_t)}(A_G) & = \int_{\mathcal{X}^{\mathbbm{N}}} P_e(A_G|v_t) \otimes_{i \in \mathbbm{N}} \mu_{1,t}(\prod_{i \in \mathbbm{N}} dv_{i,t})\\
    & = \int_{\mathcal{X}^{|V(G)|}} P_e(A_G|v_{1,t},...,v_{|V(G)|,t}) \otimes_{i \in [|V(G)|]} \mu_{1,t}(\prod_{i \in [|V(G)|]} dv_{i,t}),
\end{align*}

where the fist equation comes from $\forall i \in \mathbbm{N}, \mu_{i,t} = \mu_{1,t}$, the second follows from $|V(G)| < \infty$ and permutation-invariance. Therefore, $P_{G_e(V_t)}(A_G) > 0$ implies $\exists B_G \in \mathcal{B}_{G_{[|V_{G}|]}}$ such that $\prod_{i \in [|V(G)|]} \mu_1 (B_G) > 0$ and $$P_e(A_G|v_1,...,v_{|V(G)|}) > 0, \forall (v_1,...,v_{|V(G)|}) \in B_G.$$ Since the induced transformation of finite product of mpts is still an mpt on the finite product space, it follows from the Poincaré recurrence theorem that $(v_{1,t},...,v_{|V(G)|,t})$ goes back to $B_G$ infinitely often.\\

Lastly, $P_e(A_G|v_{1,t},...,v_{|V(G)|,t}) > 0$ implies strictly positive probability of $A_G$ for every time $(v_{1,t},...,v_{|V(G)|,t}) \in A_G$. Therefore, we conclude that any $(w_{e,t})_t|_{|V(G)|}$, which equals to $G_e(V_{1,t}...,V_{|V(G)|,t})$ by permutation-invariance, goes back to $G$ infinitely often.

\end{proof}

The above theorem tells us that the history of the dynamic graphon model will repeat itself infinitely many times, if the model is driven by i.i.d. stationary processes.\\

Moreover, if we further assume the state random processes to be weakly mixing, we have a quantitative measure of the frequency for a specific graph to return to the sample graph trajectory.\\

\begin{definition}
An mpt $f: X \rightarrow X$ is called weakly-mixing if $\forall A, B \in \mathcal{B}_X$, there exists $J(A,B) \subset \mathbbm{N}$ with density 0 such that $$\frac{1}{n}\mu_X\{f^{-n} A \cap B\} \rightarrow \mu_X(A)\mu_X(B),\text{ as } n \rightarrow \infty, n \notin J,$$ where $J \subset \mathbbm{N}$ has density 0 if $$\frac{|J \cap [N]|}{n} \rightarrow 0.$$
\end{definition}

\begin{theorem}
Given a dynamic graphon model $(w_t)_t$ that is generated by a squence of i.i.d. weakly mixing random processes $(V_t)_t$. For any $e \in \mathcal{E}$, $G \in \mathcal{G}$, as $n \rightarrow \infty$, we have $$\frac{1}{n} \sum_{i = 0}^{n-1} P_e(A_G|v_0) \circ T^i \longrightarrow t(G,w_0) \text{ a.e.} .$$
\end{theorem}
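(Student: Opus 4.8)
\section*{Proof proposal for Theorem 7}

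The plan is to read the left-hand side as a Birkhoff ergodic average for the shift on the path space of the node-state processes, and then to identify the limit with $t(G,w_0)$ using the bijection of Theorem~4 (equivalently Corollary~1). First I would isolate the integrand $\phi(v):=P_e(A_G\mid v)$ and observe that, for a fixed time, it depends only on the coordinates $v_1,\dots,v_{|V(G)|}$ and is $[0,1]$-valued, hence bounded and therefore integrable against the product measure. Consequently the only portion of the dynamical system $(\mathcal{X}^{\mathbb{N}},\mathcal{B}_{\mathcal{X}}^{\otimes\mathbb{N}},\mathbb{P}\circ V_t^{-1},T)$ that is relevant is the finite product of $|V(G)|$ copies of the single-node path-space system, each carrying the time-shift $T$; this is exactly the reduction already performed in the proof of Theorem~6, and it is legitimate because $\phi$ ignores the remaining (countably many) nodes and the node processes are i.i.d.

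Second, I would bring in the hypothesis that $(V_t)_t$ consists of \emph{weakly mixing} processes. Each single-node shift is then weakly mixing, and since weak mixing is preserved under finite products, the product shift $T\times\cdots\times T$ (with $|V(G)|$ factors) is weakly mixing, hence ergodic. I would stress here that ergodicity of the factors would \emph{not} suffice, since a product of merely ergodic systems need not be ergodic; this is precisely why the stronger weak-mixing assumption is imposed. Applying Birkhoff's pointwise ergodic theorem to $\phi\in L^1$ on the product system yields
$$\frac{1}{n}\sum_{i=0}^{n-1}\phi\circ T^i \;\longrightarrow\; \int \phi\,d\mu \qquad\text{a.e.},$$
where $\phi\circ T^i$ evaluated along a trajectory is $P_e(A_G\mid v)$ read off at time $i$, which is exactly the meaning of the symbol $P_e(A_G\mid v_0)\circ T^i$ appearing in the statement.

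Third, I would evaluate the limit $\int \phi\,d\mu$. By the reduction above and stationarity (weak mixing forces the one-time marginal to be the fixed law $\mu_0$), this integral equals $\int_{\mathcal{X}^{|V(G)|}} P_e(A_G\mid v_1,\dots,v_{|V(G)|})\,\mu_0^{\otimes|V(G)|}(dv)$, which is precisely $P_{G_e(V_0)}(A_G)$ as computed in the proof of Theorem~6. Finally, Corollary~1 together with the bijection $P_{G_e(V_0)}\leftrightarrow t(\cdot,w_{e,0})\leftrightarrow w_{e,0}$ of Theorem~4 gives $P_{G_e(V_0)}(A_G)=t(G,w_{e,0})=t(G,w_0)$, completing the argument.

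I expect the main obstacle to be structural rather than analytic: making the reduction to a finite product system rigorous, and translating the informal notation $P_e(A_G\mid v_0)\circ T^i$ into an honest composition of an $L^1$ function with an iterate of a measure-preserving transformation, while checking that weak mixing of the factors (not just ergodicity) is exactly what makes the product shift ergodic so that Birkhoff applies. Once this set-up is in place, the ergodic theorem and the previously established bijection finish the proof with no delicate estimates.
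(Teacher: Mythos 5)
Your proposal follows essentially the same route as the paper's proof: restrict $P_e(A_G\mid\cdot)$ to the finite product of $|V(G)|$ single-node path spaces, use that weak mixing of the factors makes the product shift ergodic, apply Birkhoff's pointwise ergodic theorem to the bounded (hence $L^1$) integrand, and identify the space average with $t(G,w_0)$ via the correspondence $P_{G_e(V_0)}(A_G)=t(G,w_0)$. Your explicit remark that mere ergodicity of the factors would not suffice is exactly the point the paper relies on implicitly, so this is the same argument with slightly more care in the set-up.
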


\begin{proof}
For any fixed $G \in \mathcal{G}$, we can find a corresponding $A_G \in \mathcal{B}_{|V(G)|}$ due to the permutation-invariance of $P_{G_e}(V_t)$. Now, $\mu_{1,t}$'s are weakly mixing with respect to the shift operator $T$ implies that $\otimes_{i \in [|V(G)|]} \mu_{1,t}$ is ergodic with respect to $T$. Moreover, $P_e(A_G|v_0) \in L^1(\otimes_{i \in [|V(G)|]} \mu_{1,0})$. It follows from Birkhoff Ergodic Theorem that $\frac{1}{n} \sum_{i = 0}^{n-1} P_e(A_G|v_0) \circ T^i$ converges almost surely to $$\int_{\mathcal{X}^{|V(G)|}} P_e(A_G|v_{1,0},...,v_{|V(G)|,0}) \otimes_{i \in [|V(G)|]} \mu_{1,0}(\prod_{i \in [|V(G)|]} dv_{i,0}),$$ which equals to $t(G,w_0)$ by definition of $A_G$. We are done.\\
\end{proof}

The above lemma tells us a classic result from ergodic theory that the time average is equal to the space average. That is, if we obtain the space average by designing state random processes and edge-generating functions, we can have the frequency of a graph showing on an arbitrary sample graph trajectory.\\

\begin{corollary}
If the state random processes $(V_t)_t$ are i.i.d. Markov with a countably infinite state space, $|\mathcal{X}| = \aleph_0$, then for any $e \in \mathcal{E}$, $G \in \mathcal{G}$, as $n \rightarrow \infty$, we have $$\frac{1}{n} \sum_{i = 0}^{n-1} P_e(A_G|v_0) \circ T^i \longrightarrow t(G,w_0) \text{ a.e.} .$$ $t(G,w_0)$ is independent of the initial distribution.\\
\end{corollary}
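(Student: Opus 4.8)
The displayed convergence is exactly the conclusion of Theorem 7, so the plan is to (i) verify that an i.i.d.\ countable-state Markov process, run in its stationary regime, falls under the weak-mixing hypothesis of Theorem 7, and (ii) show that the resulting limit is the same for every initial law of the chain. It is worth flagging at the outset the standing hypotheses that make this work: each coordinate process $(V_{i,t})_t$ should be irreducible, aperiodic and positive recurrent, which is precisely what forces a countable-state chain to possess a \emph{unique} stationary probability distribution $\pi$ and to converge to it.

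First I would reduce to a finite product exactly as in the proof of Theorem 7: using the permutation-invariance of $P_{G_e(V_t)}$, realize $P_e(A_G\mid v_t)$ as a bounded measurable function $\phi_G$ of the finitely many coordinates $(v_{1,t},\dots,v_{|V(G)|,t})\in\mathcal X^{|V(G)|}$, so that $P_e(A_G\mid v_0)\circ T^i=\phi_G(V_{1,i},\dots,V_{|V(G)|,i})$ and the relevant dynamical system is the time-shift on the path space of the $\mathcal X^{|V(G)|}$-valued chain $\mathbf V_t:=(V_{1,t},\dots,V_{|V(G)|,t})$. Because the coordinates are i.i.d., $\mathbf V_t$ is again a Markov chain, with transition kernel the $|V(G)|$-fold tensor power of the coordinate kernel; irreducibility, aperiodicity and positive recurrence are inherited by this finite tensor power --- aperiodicity is exactly what keeps the product irreducible --- so $\mathbf V_t$ has the unique stationary law $\pi^{\otimes|V(G)|}$ and, in the stationary regime, is mixing, hence weakly mixing, hence ergodic, as a shift system. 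Theorem 7 then applies verbatim and gives $\tfrac1n\sum_{i=0}^{n-1}\phi_G\circ T^i\to\int_{\mathcal X^{|V(G)|}}\phi_G\,d\pi^{\otimes|V(G)|}$ almost everywhere, and by the identification used there this space average equals $t(G,w_0)$, where $w_0$ is the graphon attached to the stationary marginal $\pi$.

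To remove the dependence on the initial distribution I would invoke the pointwise ergodic theorem for positive recurrent countable Markov chains --- the strong law of large numbers obtained from the regenerative (excursion) decomposition at a fixed recurrent state, or equivalently the Chacon--Ornstein theorem --- which states that for every $f\in L^1(\pi^{\otimes|V(G)|})$ and every initial law, $\tfrac1n\sum_{i=0}^{n-1}f(\mathbf V_i)\to\int f\,d\pi^{\otimes|V(G)|}$ almost surely. Taking $f=\phi_G$ both upgrades the a.e.\ convergence of the previous step from the stationary start to an arbitrary start and shows that the limit $t(G,w_0)$, being the $\pi^{\otimes|V(G)|}$-integral of $\phi_G$, is a function of the transition kernel alone (through $\pi$) and not of the initial law --- which is the second assertion.

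The main obstacle is not a computation but the hypotheses. As literally stated, ``i.i.d.\ Markov with countably infinite state space'' is not enough: a null-recurrent or transient chain has no stationary probability law and no meaningful $w_0$, while a reducible or periodic chain destroys either the uniqueness of $\pi$ or the weak mixing of the finite product, so the limit would genuinely depend on the initial law. The argument therefore has to make the irreducible/aperiodic/positive-recurrent assumption explicit and check that it descends to the $|V(G)|$-fold tensor power; once that is in place, the statement is just Theorem 7 combined with the classical Markov-chain ergodic theorem.
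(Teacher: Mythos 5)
Your proposal follows the same skeleton as the paper's proof, which consists of a single sentence: the corollary ``follows directly from lemma 4 [i.e.\ the weak-mixing theorem] and the fact that Markov random processes with countably infinite state space is mixing.'' Where you differ is in honesty and completeness, and the differences are worth recording. First, you correctly observe that the paper's cited ``fact'' is false as stated: a countable-state Markov chain need not be mixing, nor even admit a stationary probability measure, unless it is irreducible, aperiodic and positive recurrent (a transient or null-recurrent chain has no invariant probability law, and a periodic or reducible chain breaks either ergodicity of the shift or uniqueness of $\pi$, in which case the limit genuinely depends on the initial law and the corollary's second assertion fails). Making those hypotheses explicit, and checking that they pass to the $|V(G)|$-fold tensor power of the kernel, is a necessary repair, not a stylistic flourish. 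Second, the paper's proof says nothing at all about the final claim that $t(G,w_0)$ is independent of the initial distribution; your appeal to the ergodic theorem for positive recurrent chains (regeneration at a recurrent state, or Chacon--Ornstein) to upgrade the stationary-start Birkhoff limit to an arbitrary initial law is exactly the missing argument. In short: same route, but you have filled two genuine gaps that the paper leaves open, and your caveat about the hypotheses identifies a real error in the source rather than a defect in your own attempt.
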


\begin{proof}
It follows directly from lemma 4. and the fact that Markov random processes with countably infinite state space is mixing.\\
\end{proof}

Therefore, we have shown that powerful analytical results are available for the behavior of the resulting dynamic graphon by making assumptions on the state random processes in our state-driven dynamic graphon model.\\

\section{Conclusion}

This paper develops a state-driven approach to both random graph processes and dynamical graphon. The state-driven approach first assigns random processes to $\mathbbm{N}$ and thereby generate time-indexed sequence of probability measures on the finite graph space, then applies the permutation-invariance property of the countable random graph model to extend the generated time-indexed sequence of the permutation-invariant probability measures to the universal graph space, and finally uses the bijection between the permutation-invariant probability measures to the universal graph space and $(\mathcal{W},\delta_{\square})$ to obtain dynamic graphon.\\

More importantly, we show from the following three aspects to show that the state-driven dynamic graphon model is a practical approach to generate dynamic graphon.\\

\begin{itemize}
    \item (1) graph limit definition preservation: the model constructs a time-index sequence of permutation-invariant probability measures on the universal graph space, which is shown to be equivalent to a graphon at each fixed time, and thereby preserves the graph limit definition of graphon.
    \item (2) genericity: the model can be easily generalized by adding a distribution to the set of all edge-generating functions before using the determined edge-generating function to generate the i.i.d. Bernoulli edge random variables. Thereafter, an application of Aldous-Hoover representation lemma shows the state-driven model covers $(\mathcal{W},\delta_{\square})$.
    \item (3) analysis availability: due to the separation of randomness to the state random processes and the edge random variables, the state-driven graphon model is closer to real-world randomly connected systems and more convenient to analyze the interaction between the state random processes and edge random variables.\\
\end{itemize}

Future directions on the state-driven dynamic graphon includes large network formation and modeling large randomly connected dynamical systems. We are interested in studying the influence of the resulting graphon to the state random processes. One example is the SIR model in the study of epidemics \cite{YRYP}. For example, one can apply the dynamical graphon to modify the SIR model so that the model could reflect the interaction between the topology of a time-variant large network and the contact process on it. Moreover, since the state random variable could represent economic status, geometric distance, or social distancing, it is not hard to put control in the state process to influence the network and therefore drive the dynamics to the optimal trajectory for some objective function.\\

Finally, the work focuses on a discrete-time dynamical system setting. Continuous-time randomly networked dynamical systems are remained for further study.\\

\newpage

\end{document}